\documentclass[12pt]{amsart}
\usepackage{amsmath,amsthm,amsfonts,amssymb} 
\usepackage[all]{xy}
\usepackage{color}

\usepackage{bbm}
\usepackage{latexsym}
\usepackage{amscd}
\usepackage{comment}

\usepackage[margin=1in]{geometry}      
\allowdisplaybreaks 

\numberwithin{equation}{subsection} 
\newtheorem{thm}[equation]{Theorem} 
\newtheorem*{thmA}{Theorem} 

\newtheorem{prop}[equation]{Proposition}
\newtheorem{lemma}[equation]{Lemma} 

\newtheorem{example}[equation]{Example}
\newtheorem{remark}[equation]{Remark}

\newtheorem{definition}[equation]{Definition}

\DeclareMathAlphabet{\mathpzc}{OT1}{pzc}{m}{it} 
\DeclareMathOperator{\gr}{gr} 
\DeclareMathOperator{\kG}{\mathbbm{k}G}

\DeclareMathOperator{\Tot}{Tot}

\DeclareMathOperator{\coh}{H}

\DeclareMathOperator{\Ext}{Ext}
\DeclareMathOperator{\Hom}{Hom}
\DeclareMathOperator{\ima}{Im}
\DeclareMathOperator{\Ker}{Ker}
\DeclareMathOperator{\Span}{Span}

\renewcommand{\k}{{\mathbbm k}}
\newcommand*{\grH}{\ensuremath{\text{\upshape gr}}\, H}

\newcommand{\Z}{{\mathbb Z}}
\newcommand{\N}{{\mathbb N}}

\newcommand{\ot}{\otimes}

\newcommand*{\YD}{\,^{G}_{G} \mathcal{YD}}

\newcommand{\DOT}{\setlength{\unitlength}{1pt}\begin{picture}(2.5,2)
               (1,1)\put(2,2.5){\circle*{2}}\end{picture}}

\newcommand{\bu}{\DOT}


\begin{document}

\subjclass[2010]{16E05, 16E40, 16T05}

\keywords{cohomology, positive characteristic, (pointed) Hopf algebras, Anick resolutions, twisted tensor products}

\thanks{The third author was partially supported by NSF grant DMS-1401016.}

\title[Cohomology of pointed Hopf algebras]
{Finite generation of some cohomology rings \\  
via twisted tensor product and Anick resolutions}

\author{Van C.\ Nguyen} 
\address{Department of Mathematics\\Hood College\\Frederick, MD 21701}
\email{nguyen@hood.edu}

\author{Xingting Wang}
\address{Department of Mathematics\\Temple University \\Philadelphia, PA 19122}
\email{xingting@temple.edu}

\author{Sarah Witherspoon}
\address{Department of Mathematics\\Texas A\&M University \\College Station, TX 77843}
\email{sjw@math.tamu.edu}

\date{October 18, 2017}

\begin{abstract}
Over a field of prime characteristic $p>2$, we prove that the 
cohomology rings of some pointed Hopf algebras of 
dimension $p^3$ are finitely generated.
These are Hopf algebras arising in the ongoing classification of finite
dimensional Hopf algebras in positive characteristic, and include
bosonizations of Nichols algebras of Jordan type in a general setting as well as
their liftings when $p=3$. 
Our techniques are applications of twisted tensor product
resolutions and Anick resolutions in combination with May spectral sequences.
\end{abstract}

\maketitle
\tableofcontents


\section{Introduction}

The cohomology ring of a finite dimensional Hopf algebra is conjectured to
be finitely generated.
Friedlander and Suslin~\cite{FS} proved this for cocommutative Hopf algebras,
generalizing earlier results of Evens~\cite{Evens}, 
Golod~\cite{golod}, and Venkov~\cite{venkov}
for finite group algebras
and of Friedlander and Parshall~\cite{FP} for restricted Lie algebras. 
There are many finite generation results as well 
for various types of noncocommutative Hopf algebras
(see, e.g.,~\cite{BNPP,ginzburg-kumar93,Gordon,MPSW,SV}).
Most of these results are in characteristic~0. 
In this paper, we prove finite generation for classes of
noncocommutative Hopf algebras in prime characteristic~$p>2$.
These are some of the pointed Hopf algebras arising in classification work
of the first two authors.

Our main result is the following combination of
Theorems \ref{thm:main1} and \ref{thm:main2} below:

\begin{thmA}
Let $\k$ be an algebraically closed field of prime characteristic $p>2$. Consider the following Hopf algebras over $\k$:
 \begin{enumerate}
  \item the $p^2q$-dimensional bosonization $R \# \kG$ of a rank two Nichols algebra $R$ of Jordan type over a cyclic group $G$ of order $q$, where $q$ is divisible by $p$; and
  \item a lifting $H$ of $R\#\kG$ when $p=q=3$.
 \end{enumerate}
Then the cohomology rings of $R \# \kG$ and of $H$ are finitely generated. 
\end{thmA}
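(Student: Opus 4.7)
The plan is to treat the two cases separately and then combine the results. For part~(1), I would produce an explicit projective resolution of $\k$ over $R \# \kG$ by the twisted tensor product construction, fed with an Anick resolution of $\k$ over the Nichols algebra $R$ and the standard periodic resolution of $\k$ over the cyclic group algebra $\kG$. For part~(2), I would filter the lifting $H$ so that the associated graded algebra is $R \# \kG$ and exploit the resulting May spectral sequence to transfer finite generation from part~(1) to $H$.

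For part~(1), the rank-two Nichols algebra $R$ of Jordan type has a presentation on two generators with truncation relations $x^p,\,y^p$ and a Jordan-type bracket relation, which admits a Gr\"obner basis for a suitable monomial order. Anick's construction then furnishes a minimal free resolution of $\k$ over $R$, from which I can read off $\Ext^*_R(\k,\k)$ together with its induced $G$-action. For the cyclic group $G$ of order $q$ with $p \mid q$, the cohomology $H^*(\kG,\k) = \k[\xi] \otimes \Wedge(\eta)$ is well known and computed from the standard $2$-periodic resolution. The twisted tensor product construction then glues the two resolutions into a resolution of $\k$ over $R \# \kG$ whose differential encodes the $G$-action. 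Computing $\Ext^*_{R\#\kG}(\k,\k)$ from this complex, I would isolate a polynomial subring (generated by even-degree classes, including the pullback of $\xi$) over which the full cohomology ring is a finitely generated module.

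For part~(2), the coradical filtration on $H$ has associated graded Hopf algebra $\gr H \cong R \# \kG$, and the induced multiplicative filtration yields a May spectral sequence with $E_1 = H^*(R \# \kG,\k)$ converging to $H^*(H,\k)$ as bigraded algebras. Part~(1) makes $E_1$ a finitely generated algebra, so the task reduces to lifting enough polynomial generators of $E_1$ to permanent cycles in $H^*(H,\k)$ and concluding that $H^*(H,\k)$ is finitely generated as a module over the subring they span. The main obstacle, and where the hypothesis $p = q = 3$ really enters, is showing that the May differentials vanish on the chosen generators. I expect this to require either a bidegree argument ruling out nontrivial differentials on the relevant classes, or an explicit chain-level lift of those classes in a twisted tensor product resolution adapted to the defining relations of $H$, carried out case-by-case across the small family of liftings available at $p = 3$.
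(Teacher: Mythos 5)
Your overall architecture (twisted tensor product and Anick resolutions feeding a May spectral sequence, closed out by the Friedlander--Suslin noetherianity lemma) matches the paper's, but the route through part~(2) is genuinely different, and that is where your plan is thinnest. The paper does \emph{not} use the coradical filtration with $E_1\cong \coh^*(R\#\kG,\k)$; instead it filters $H(\epsilon,\mu,\tau)$ by a degree-lex order on $w=g-1,x,y$ so that the associated graded algebra is the commutative truncated polynomial ring $\k[w,x,y]/(w^3,x^3,y^3)$. This buys two things at once: the $E_1$-page is completely explicit (polynomial tensor exterior), so its noetherianity over $\k[\xi_w,\xi_x,\xi_y]$ is immediate, and the permanent cycles are exhibited concretely by computing the Anick differentials $d_2,d_3$ of $H$ itself and checking that $d_3^*$ kills $(w^3)^*,(x^3)^*,(y^3)^*$. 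In your version the $E_1$-page is itself only known through the part~(1) spectral sequence, and Lemma~\ref{lem:MPSW} requires more than finite generation of $E_1$ as an algebra: you need $E_1$ to be a noetherian module over the image of a polynomial algebra of \emph{permanent} cycles. Your first proposed mechanism (a bidegree argument ruling out differentials on the degree-$2$ generators) is not available here --- nothing about bidegrees forces $d_r$ to vanish on these classes, which is exactly why the paper resorts to explicit chain-level computation. Your second mechanism (explicit lifts adapted to the relations of $H$) is what the paper actually does, but then the detour through $\gr H\cong R\#\kG$ gains you nothing; you may as well filter all the way down. The same remark applies to part~(1): the paper does not deduce module-finiteness directly from the explicit complex for $R\#\kG$ (which would require computing many more cup products), but again filters $R\#\kG$ down to $\k[w,x,y]/(w^q,x^p,y^p)$ and uses the twisted tensor product resolution only to certify that $\xi_w,\xi_x,\xi_y$ lift to genuine cocycles on the filtered algebra. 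Two small further corrections: the resolution over $R\#\kG$ is not minimal (some differentials have coefficients outside the augmentation ideal), so you should not rely on minimality of an Anick resolution of $R$; and the reduction to $q=p^a$ (splitting off the $p'$-part of $G$) is a step you will need before the filtration argument works cleanly.
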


Our theorem is exclusively an odd characteristic result since the Nichols
algebra of Jordan type does not appear in characteristic~2.
Instead there is another related Nichols algebra~\cite{CLW} that
will require different techniques.
Part (2) of our main theorem above is only stated for
characteristic~3; this is because we use the classification of such
Hopf algebras from~\cite{NWa}, and liftings are only known completely
in this case.
We expect our homological techniques will be able to handle liftings
when $p>3$ once more is known about their structure.

More specifically, we let $R\#\kG$ be a $p^2q$-dimensional Hopf algebra given by
the bosonization of a Jordan plane as introduced in~\cite{CLW}
(see Section~\ref{subsec:settingR} below).
We prove in Theorem~\ref{thm:main1} that the cohomology ring
of such a Hopf algebra $R\# \k G$,
that is $\coh^*(R\# \k G,\k):= \Ext^*_{R\# \k G}(\k ,\k )$, is finitely generated.
We also consider liftings $H$ of $R\# \k G$ in
the special case $p=q=3$ (see Section~\ref{subsec:settingH} below).
We prove in Theorem~\ref{thm:main2} that the cohomology ring
$\coh^*( H, \k)$ is finitely generated. 

Our techniques for proving the main theorem above rely on the
May spectral sequence for the cohomology of a filtered algebra.
In either setting, (1) or (2), we may choose a filtration for which the
associated graded algebra is a truncated polynomial ring whose 
cohomology is straightforward.
The hard work is in finding some permanent cycles as required to use a
spectral sequence lemma that goes back to Friedlander
and Suslin~\cite{FS}.
This we do in two ways, by constructing two types of resolutions which
should be of independent interest.
The general definitions of the resolutions are not new, but
we provide here some nontrivial examples, with our main
theorem as an application.
The first resolution is the twisted tensor product resolution of~\cite{SW},
recalled in Section~\ref{subsec:twisted}, used in
Section~\ref{subsec:resoln R}, and iterated in 
Section~\ref{subsec:resoln bosonization}, 
to obtain a resolution over the bosonization $R\# \k G$ 
of the Nichols algebra $R$. 
This is used in Section~\ref{subsec:cohomology grH} 
to prove finite generation of its cohomology. 
The second resolution is the Anick resolution~\cite{Anick}, 
explained in Section~\ref{subsec:Anick}, with a general result 
for the Anick resolution of a truncated polynomial ring 
in Section~\ref{subsec:tp}.
This Anick resolution is used in Section~\ref{subsec:cohomology lifting} 
to prove finite generation of the cohomology of the liftings $H$. 
We could have chosen to work with just one type of resolution,
either Anick  or  twisted tensor product,
for proofs of both parts~(1) and~(2) of our main theorem above.
We instead chose to work with both resolutions to illustrate
a wider variety of techniques available, each having its own advantages.


\section{Settings}

Throughout, let $\k$ be a field.
For our main results we will assume that $\k$ is algebraically closed field
of prime characteristic $p>2$ since we work with Hopf algebras found
in classification work under this assumption.
The tensor product $\ot$ is $\ot_\k$ unless specified otherwise. 
In this section we will define the Nichols algebras and pointed Hopf algebras 
that are featured in this paper, and summarize some structural results that
will be needed.


\subsection{Pointed Hopf algebras}

Let $H$  
be any finite dimensional pointed Hopf algebra over $\k$. The \emph{coradical} (the sum of all simple subcoalgebras) of $H$ is $H_0 = \kG$, a Hopf subalgebra of $H$ generated by the grouplike elements $G:=\{ g \in H \,|\, \Delta(g)=g \otimes g\}$,
where $\Delta$ is the coproduct on $H$. 

Let 
$$H_0 \subseteq H_1 \subseteq H_2  \subseteq \cdots \subseteq H $$ 
be the \emph{coradical filtration} of $H$, where $H_n=\Delta^{-1}(H\otimes H_{n-1} + H_0\otimes H)$ inductively, see \cite[Chapter 5]{MO93}. Consider the associated graded Hopf algebra $\grH = \bigoplus_{n \geq 0} H_n/H_{n-1}$, with the convention $H_{-1}=0$. Note that the zero term of $\grH$ equals its coradical, i.e., $(\grH)_0=H_0$. There is a projection $\pi: \grH \rightarrow H_0$ and an inclusion $\iota: H_0 \rightarrow \grH$ such that $\pi \iota = 1$ (the identity map on $H_0$). Let $R$ be the algebra of coinvariants of $\pi$:
$$R:=(\grH)^{co\, \pi} = \{ h \in \grH \,: \, ({\mathbf{1}_H} \otimes \pi)\Delta(h)= h \otimes 1\} , $$
where ${\mathbf{1}_H}$ denotes the identity map on $H$.
By results of Radford \cite{R85} and Majid \cite{Mj}, $R$ is a Hopf algebra in the braided category $\YD$ of left Yetter-Drinfeld modules over $H_0=\kG$. Moreover, $\grH$ is the \emph{bosonization} (or \emph{Radford biproduct}) of $R$ and $H_0$ so that $\grH \cong R \# H_0$ with the Hopf structure given in \cite[Theorem 10.6.5]{MO93}.
As an algebra, it is simply the smash product of $H_0$ with $R$,
analogous to a semidirect product of groups.


\subsection{Our setting: Rank two Nichols algebra and its bosonization}
\label{subsec:settingR}

For Sections~\ref{sec:twisted} and~\ref{subsec:cohomology grH}, we use the following setup. 
Let $\k$ be a field of prime characteristic $p>2$. 
Let $G:=\langle g\rangle \cong \mathbb{Z}/q\mathbb{Z}$ be a cyclic group whose order $q$ is divisible by $p$. Consider
\[ 
   \displaystyle R := \k\langle x,y \rangle \Big{/} \left(x^p, \ y^p , \ yx-xy-\frac{1}{2}x^2 \right),
\]
which is a $p^2$-dimensional algebra as described in \cite[Theorem 3.5]{CLW}, with a vector space basis $\{x^iy^j \mid 0 \leq i,j \leq p-1\}$. Observe that $R$ is the Nichols algebra of a rank two Yetter-Drinfeld module $V=\k x+ \k y$ over $G$, where the $G$-action on $V$ is given by
\[
    {}^g x = x \ \ \ \text{ and } \ \ \ {}^g y = x+y,
\]
(here the left superscript indicates group action), and the $G$-gradings of $x$ and $y$ are both given by $g$. 

Let $R\# \kG$ be the bosonization of $R$ and $\kG$. It is the corresponding $p^2q$-dimensional pointed Hopf algebra as studied in~\cite[Corollary 3.14]{CLW}, \cite[\S 3 (Case B)]{NWa}, and \cite[\S 4]{NWi}. 
Its Hopf structure is given by: 
\begin{align*}
  \Delta(x) &= x\ot 1 + g\ot x, & \Delta(y) &= y\ot 1 + g\ot y, & \Delta(g) &= g\ot g, \\
  \varepsilon(x) &= 0, & \varepsilon(y) &=0, & \varepsilon(g)&=1, \\
   S(x) &= -g^{-1}x, & S(y) &= -g^{-1}y, & S(g)&=g^{-1},
\end{align*}
where $\Delta$ is the coproduct, $\varepsilon$ is the counit, and $S$ is the antipode map of $R\# \kG$.

\begin{remark} We remark the following:
\begin{enumerate}
 \item These $R, G, R\# \kG$ appear in \cite{CLW,NWa,NWi} for various purposes. In their settings, $G$ is a cyclic group of order $p$. Here, we consider a more general setting with $G$ being cyclic of order $q$ divisible by $p$. 
 
 \item In \cite{NWa}, the first two authors classified $p^3$-dimensional pointed Hopf algebra over prime characteristic $p$. In their classification work, this $p^2$-dimensional Nichols algebra $R$ of Jordan type is unique, up to isomorphism, and only occurs when $p>2$. 
 
 \item The authors of~\cite{CLW} used right modules in their settings, whereas left modules were used in the classification work in \cite{NWa}, thus inducing a sign difference in the relation $yx-xy-\frac{1}{2}x^2$ of $R$ there. 
Here, we adopt the relation described in \cite{CLW}. In Section~\ref{subsec:cohomology lifting}, when we study the cohomology of pointed Hopf algebras $H$ lifted from the associated graded algebras $\grH \cong R \# \kG$, we modify the relations of the lifting structure given in \cite{NWa} accordingly, see the next Section~\ref{subsec:settingH}. 
\end{enumerate}
\end{remark}


\subsection{Our setting: A class of $27$-dimensional pointed Hopf algebras}
\label{subsec:settingH}

Let $\k$ be a field of characteristic~$p=3$ and consider Hopf algebras $H(\epsilon,\mu,\tau)$ defined from three scalar parameters $\epsilon,\mu,\tau$ as in~\cite{NWa}.
These are pointed Hopf algebras of dimension $27$ whose associated graded algebra
is $\gr H(\epsilon,\mu,\tau) \cong R \# \kG$, the
Hopf algebra described in Section~\ref{subsec:settingR}, in this case $p=q=3$.
 
As an algebra, $H(\epsilon,\mu,\tau)$ is generated by $g,x,y$ with relations 
\begin{gather*}
\qquad \qquad \ \ g^3=1, \qquad x^3=\epsilon x, \qquad y^3= -\epsilon y^2 - (\mu\epsilon -\tau -\mu^2)y, \\
yg-gy= xg + \mu(g-g^2), \qquad  xg-gx=-\epsilon (g-g^2), \\
yx-xy= - x^2 + (\mu+\epsilon)x +\epsilon y + \tau(1-g^2), 
\end{gather*}
where $\epsilon\in \{0,1\}$ and $\tau,\mu \in \k$ are arbitrary scalars.
The coalgebra structure is the same as that of $R\# \k G$ described 
in Section~\ref{subsec:settingR}.  
By setting $w = g-1$,
we get a new presentation in which the 
generators are $w,x,y$ and the relations are:
\begin{gather*}
w^3=0, \qquad x^3=\epsilon x, \qquad y^3= -\epsilon y^2 - (\mu\epsilon -\tau -\mu^2)y, \\
yw-wy= wx+x- (\mu - \epsilon) (w^2+w), \qquad  xw-wx=\epsilon (w^2+w), \\
yx-xy= - x^2 + (\mu+\epsilon)x +\epsilon y -\tau(w^2-w). 
\end{gather*}
This choice of generating set will be convenient for our
homological arguments later. 
As shown in~\cite{NWa}, $H(\epsilon,\mu,\tau)$
has dimension~27, with vector space basis 
$\{w^i x^j y^k \mid 0\leq i,j,k\leq 2\}$.


\section{Twisted tensor product resolutions}
\label{sec:twisted}

In this section, we apply the construction of twisted tensor product resolutions introduced in \cite{SW} to our Nichols algebra $R$ and its bosonization $R\# \kG$ that was defined in Section~\ref{subsec:settingR}.


\subsection{The resolution construction}
\label{subsec:twisted}

Let $A$ and $B$ be associative algebras over $\k$ with multiplication maps $m_A : A \ot A \rightarrow A$ and $m_B : B \ot B \rightarrow B$, and multiplicative identities $1_A$ and $1_B$, respectively. We write $\mathbf{1}$ for the identity map on any set. 

A \emph{twisting map} $\tau: B \ot A \rightarrow A \ot B$ is a bijective $\k$-linear map for which $\tau(1_B \ot a) = a \ot 1_B$ and $\tau(b \ot 1_A) = 1_A \ot b$, for all $a \in A$ and $b \in B$, and 
\begin{equation}\label{eqn:twist-def}
\tau \circ (m_B \ot m_A) = (m_A \ot m_B) \circ (\mathbf{1} \ot \tau \ot \mathbf{1}) \circ (\tau \ot \tau) \circ (\mathbf{1} \ot \tau \ot \mathbf{1})
\end{equation}
as maps $B \ot B \ot A \ot A \rightarrow A \ot B$. The \emph{twisted tensor product algebra} $A \ot_\tau B$ is the vector space $A \ot B$ together with multiplication $m_\tau$ given by such a twisting map $\tau$, that is, 
$m_{\tau} : (A\ot B)\ot (A\ot B)\rightarrow A\ot B$
is given by $m_{\tau} = (m_A\ot m_B)\circ (\mathbf{1} \ot \tau \ot \mathbf{1})$. 

\begin{definition}\cite[Definition 5.1]{SW}
\label{def:compatible twisting}{\em 
Let $M$ be an $A$-module with module structure map $\rho_{A,M} : A \ot M \rightarrow M$. 
We say $M$ is \emph{compatible with the twisting map $\tau$} if there is a bijective $\k$-linear map $\tau_{B,M}: B \ot M \rightarrow M \ot B$ such that
\begin{align} \label{compatible twisting1}
\tau_{B,M} \circ (m_B \ot \mathbf{1}) &= (\mathbf{1} \ot m_B) \circ (\tau_{B,M} \ot \mathbf{1}) \circ (\mathbf{1} \ot \tau_{B,M}), \text{ and} \\
\tau_{B,M} \circ (\mathbf{1} \ot \rho_{A,M}) &= (\rho_{A,M} \ot \mathbf{1}) \circ (\mathbf{1} \ot \tau_{B,M}) \circ (\tau \ot \mathbf{1}) \label{compatible twisting2}
\end{align}
as maps on $B \ot B \ot M$ and on $B \ot A \ot M$, respectively.}
\end{definition}

Let $M$ be an $A$-module that is compatible with $\tau$. We say a projective $A$-module resolution $P_{\bu} (M)$ of $M$ is \emph{compatible with the twisting map $\tau$} if each module $P_i(M)$ is compatible with $\tau$ via maps $\tau_{B,i}$ for which $\tau_{B,\bu}: B \ot P_{\bu} (M) \rightarrow P_{\bu} (M) \ot B$ is a $\k$-linear chain map lifting $\tau_{B,M}: B \ot M \rightarrow M \ot B$. Let $N$ be a $B$-module and let $P_{\bu}(N)$ be a projective resolution of $N$ over $B$. We put an $A \ot_{\tau} B$-module structure on the bicomplex $P_{\bu}(M) \ot P_{\bu}(N)$ by using maps $\tau_{B,\bu}$.

Under such compatibility conditions, the twisted tensor product resolutions for left modules over $A \ot_\tau B$ were constructed in \cite{SW} satisfying the following theorem. 

\begin{thm}\cite[Theorem 5.12]{SW}
\label{thm:twisted resoln}
Let $A$ and $B$ be $\k$-algebras with twisting map $\tau: B \ot A \rightarrow A \ot B$. Let $P_{\bu}(M)$ be an $A$-projective resolution of $M$ and $P_{\bu}(N)$ be a $B$-projective resolution of $N$. Assume 
\begin{enumerate}
 \item[(a)] $M$ and $P_{\bu}(M)$ are compatible with $\tau$, and 
 \item[(b)] $Y_{i,j}=P_i(M) \ot P_j(N)$ is a projective $A \ot_{\tau} B$-module, for all $i,j$. 
\end{enumerate} 
Then the twisted tensor product complex $Y_{\bu}=\Tot(P_{\bu}(M) \ot P_{\bu}(N))$ with
$$Y_n = \bigoplus_{i+j=n} P_i(M) \ot P_j(N)$$
is a projective resolution of $M \ot N$ as a module over the twisted tensor product $A \ot_{\tau} B$. 
\end{thm}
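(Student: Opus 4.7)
The plan is to verify three things: (i) each $Y_n$ carries a well-defined $A\ot_\tau B$-module structure extending the bimodule-style action built from $\tau_{B,\bu}$, and is projective; (ii) the total differential on $Y_\bu$ is $A\ot_\tau B$-linear; and (iii) $Y_\bu$ augmented by $m_M\ot m_N: P_0(M)\ot P_0(N)\to M\ot N$ is exact. Parts (i) and (iii) are largely formal once the correct setup is in place; the core work lies in (ii), where the chain map property of $\tau_{B,\bu}$ must interact correctly with the compatibility conditions \eqref{compatible twisting1}-\eqref{compatible twisting2}.

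For (i), I would first note that, just as $P_i(M)\ot P_j(N)$ is made into an $A\ot_\tau B$-module by the recipe $(a\ot b)\cdot(u\ot v)=(a\ot \mathbf 1)\circ(\mathbf 1\ot \tau_{B,i})\circ(\mathbf 1\ot b\ot v)(u)$, i.e.\ by applying $\tau_{B,i}$ to move $b$ past $u$ before acting, the fact that this defines an action follows exactly from \eqref{compatible twisting1} (associativity in $B$) and the twist axiom \eqref{eqn:twist-def} plus \eqref{compatible twisting2} (associativity across $A$ and $B$). Projectivity of each $Y_n=\bigoplus_{i+j=n}P_i(M)\ot P_j(N)$ is then assumption~(b) together with closure of projectivity under finite direct sums.

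For (ii), which I expect to be the main obstacle, the total differential on $P_i(M)\ot P_j(N)$ is $d=d^M\ot\mathbf 1+(-1)^i\mathbf 1\ot d^N$. The summand $d^M\ot\mathbf 1$ is clearly $A$-linear in the first tensorand; the issue is compatibility with the $b$-action, which requires the square
\[
\begin{CD}
B\ot P_i(M)\ot P_j(N) @>\mathbf 1\ot d^M\ot\mathbf 1>> B\ot P_{i-1}(M)\ot P_j(N) \\
@V\tau_{B,i}\ot\mathbf 1VV @VV\tau_{B,i-1}\ot\mathbf 1V \\
P_i(M)\ot B\ot P_j(N) @>d^M\ot\mathbf 1\ot\mathbf 1>> P_{i-1}(M)\ot B\ot P_j(N)
\end{CD}
\]
to commute, which is exactly the statement that $\tau_{B,\bu}$ lifts $\tau_{B,M}$ to a chain map. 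The summand $(-1)^i\mathbf 1\ot d^N$ is $B$-linear on the right tensorand and commutes with moving $A$ past $P_j(N)$ trivially (since the $A$-action is only on the left factor after $\tau_{B,i}$ is applied); $B$-linearity uses that $d^N$ is $B$-linear, and compatibility with the twisted action follows since $\tau_{B,i}$ does not involve $j$. The signs $(-1)^i$ ensure $d^2=0$, by the standard Koszul argument for total complexes.

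For (iii), since $\k$ is a field, each $P_i(M)$ is $\k$-flat, so the double complex $P_\bu(M)\ot P_\bu(N)$ has a collapsing spectral sequence: filtering by columns, the $E^1$-page computes the homology of each row $P_i(M)\ot P_\bu(N)$, which is $P_i(M)\ot N$ concentrated in degree $0$; the $E^2$-page then computes $H_\bu(P_\bu(M)\ot N)=M\ot N$ concentrated in bidegree $(0,0)$. Hence $\Tot(P_\bu(M)\ot P_\bu(N))$ is a resolution of $M\ot N$ as a $\k$-vector space. Finally, that the augmentation $P_0(M)\ot P_0(N)\to M\ot N$ is $A\ot_\tau B$-linear follows from the definition of the action together with \eqref{compatible twisting2} applied at the level of $M$. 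Combining (i)-(iii) gives the desired projective resolution.
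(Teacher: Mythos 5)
Your proof is correct and takes essentially the same route as the paper, which (after substituting hypothesis (b) for projectivity of each $Y_{i,j}$) simply defers to \cite[Lemmas 5.8 and 5.9]{SW} — precisely the two verifications you reconstruct: that the twisted action makes the total differential an $A\ot_\tau B$-module map via the chain-map property of $\tau_{B,\bu}$ and conditions \eqref{compatible twisting1}--\eqref{compatible twisting2}, and that exactness follows from the K\"unneth-type collapse over the field $\k$. Only cosmetic points: the augmentation should be written $\varepsilon_M\ot\varepsilon_N$ rather than ``$m_M\ot m_N$'', and unitality of the action also uses $\tau_{B,i}(1_B\ot u)=u\ot 1_B$.
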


By $\Tot (P_{\bu}(M) \ot P_{\bu}(N))$ we mean the total complex of
the bicomplex $P_{\bu}(M) \ot P_{\bu}(N)$, that is the complex whose
$n$th component is $Y_n = \oplus_{i+j=n} (P_i(M)\otimes P_j(N))$ and
differential is $d_n = \sum_{i+j=n} d_{ij}$ where
$d_{ij}= d_i\ot {\mathbf{1}} + (-1)^i{\mathbf{1}}\ot d_j$. 

In the statement of \cite[Theorem 5.12]{SW}, we have replaced one
of the hypotheses by our hypothesis (b) above,
and in this case the proof is given by~\cite[Lemmas~5.8 and~5.9]{SW}.
In some contexts, such as ours here, the hypothesis (b) can be
checked directly, in which case this version of the theorem is 
sufficient.

For the rest of this section, let $R,G$ be as defined in Section~\ref{subsec:settingR}. 
Let $\k$ also denote the trivial (left) $R$- or $(R \# \kG)$-module, that is, the field $\k$
with action given by the augmentation
$\varepsilon(x)=0$, $\varepsilon(y)=0$, $\varepsilon(g)=1$.
We will construct a resolution of $\k$ as an $R$-module via
a twisted tensor product, and as an $(R \# \kG)$-module by iterating
the twisted product construction.


\subsection{Resolution over the Nichols algebra $R$}
\label{subsec:resoln R}

Let $\k$ be a field of characteristic $p>2$, and let $R$ be
the Nichols algebra described in Section~\ref{subsec:settingR}.
Let $\k$ be the $R$-module on which $x$ and $y$ both act as~0. 

We start with a construction of a resolution of $\k$ as an $R$-module.
Using the relation (3.9) in \cite[Lemma 3.8]{CLW}, we can view 
$R$ as the twisted tensor product 
$$R=\k\langle x,y \rangle \Big{/} \left(x^p, \ y^p , \ yx-xy-\frac{1}{2}x^2 \right) \cong A\ot_{\tau} B,$$ 
where
$A:=\k[x]/(x^p)$, $B:=\k[y]/(y^p)$. The twisting map 
$\tau: B\ot A \rightarrow A\ot B$ is defined by 
\[ \tau(y^r\ot x^\ell) = \sum_{t=0}^r {r \choose t} \left(\frac{1}{2}\right)^t [\ell]^{[t]} \ x^{\ell+t} \ot y^{r-t},\] 
where we use the convention: 
$$[\ell]^{[t]}=\ell(\ell+1)(\ell+2) \cdots (\ell+t-1),$$ 
with $[\ell]^{[0]}=1$, for any $\ell$. 

Consider the following free resolutions of $\k$ as $A$-module
and as $B$-module, respectively: 
\[
\begin{xy}*!C\xybox{
\xymatrix{
   P_{\bu}(A): & \cdots\ar[r]^{x^{p-1}\cdot} & A\ar[r]^{x\cdot}
                 & A \ar[r]^{x^{p-1}\cdot}  & A\ar[r]^{x\cdot}
     & A\ar[r]^{\varepsilon} & \k\ar[r] & 0 \\
  P_{\bu}(B): & \cdots\ar[r]^{y^{p-1}\cdot} & B\ar[r]^{y\cdot}
         & B \ar[r]^{y^{p-1}\cdot} & B \ar[r]^{y\cdot}
     & B \ar[r]^{\varepsilon} & \k\ar[r] & 0. 
}}
\end{xy}
\]
The map $\varepsilon$ on $A$ (respectively, on $B$) 
takes $x$ to 0 (respectively, $y$ to 0). 
Consider the tensor product $P_{\bu}(A)\ot P_{\bu}(B)$ as a graded vector
space. The total complex is a complex of vector spaces with differential
in degree $n$ given by 
\[
   d_n = \sum_{i+j=n} (d_i\ot 1 + (-1)^i \ot d_j) .
\]
We will put the structure of an $R$-module on each $P_i(A)\ot P_j(B)$
so that this tensor product complex is an $R$-projective resolution
of $\k$ as an $R$-module. 
This will follow from~\cite[Lemmas~5.8 and~5.9]{SW} once we define
a chain map as in Definition~\ref{def:compatible twisting} and check that the
resulting $R$-modules are indeed projective.
That is, as in Definition~\ref{def:compatible twisting}, there are bijective
$\k$-linear maps $\tau_{B,i}$ that we abbreviate here as $\tau_i$,
for which the diagram
\[{\small
\begin{xy}*!C\xybox{
\xymatrixcolsep{3.3pc}\xymatrix{
  \cdots \ar[r]^{\mathbf{1} \ot x^{p-1} \cdot} & B\ot A\ar[d]^{\tau_3}\ar[r]^{\mathbf{1} \ot x \cdot} 
   & B\ot A \ar[d]^{\tau_2}\ar[r]^{\mathbf{1} \ot x^{p-1} \cdot} & B\ot A\ar[d]^{\tau_1}
     \ar[r]^{\mathbf{1}\ot x \cdot}
    & B\ot A \ar[d]^{\tau_0}\ar[r]^{\mathbf{1} \ot \varepsilon} & B\ot \k\ar[r]\ar[d]^{\cong} 
   & 0 \\
  \cdots \ar[r]^{x^{p-1} \cdot \ot \mathbf{1}} & A\ot B\ar[r]^{x \cdot \ot \mathbf{1}} 
   & A\ot B\ar[r]^{x^{p-1}\cdot \ot \mathbf{1}} & A\ot B\ar[r]^{x \cdot \ot \mathbf{1}}
   & A\ot B \ar[r]^{\varepsilon\ot \mathbf{1}} & \k\ot B \ar[r] & 0 
}}
\end{xy}}
\]
commutes and conditions \eqref{compatible twisting1} and \eqref{compatible twisting2} hold.
We claim that the following maps $\tau_i$ satisfy the above conditions.

\begin{lemma}
For any integer $i \geq 0$, let $\tau_i: B \ot A \rightarrow A \ot B$ be defined as follows:
\[
\tau_i(y^r \ot x^\ell) = \begin{cases} \tau(y^r \ot x^\ell), \quad & i \text{ is even} \vspace{0.7em} \\
\sum_{t=0}^r {r \choose t} \left(\frac{1}{2}\right)^t [\ell+1]^{[t]} \ x^{\ell+t} \ot y^{r-t}, \quad & i \text{ is odd}.
\end{cases}
\]
Then 
\begin{enumerate}
 \item[(a)] $\tau_i$ is a bijective $\k$-linear map whose inverse is
\[
\tau_i^{-1}(x^\ell \ot y^r) = \begin{cases} \sum_{t=0}^r {r \choose t} \left(- \frac{1}{2}\right)^t [\ell]^{[t]} \ y^{r-t} \ot x^{\ell+t}, \quad & i \text{ is even} \vspace{0.7em} \\
\sum_{t=0}^r {r \choose t} \left(- \frac{1}{2}\right)^t [\ell+1]^{[t]} \ y^{r-t} \ot x^{\ell+t}, \quad & i \text{ is odd}.
\end{cases}
\]

 \item[(b)] $\tau_i$ satisfies conditions \eqref{compatible twisting1} and \eqref{compatible twisting2}. In particular, 
\begin{align*}
\tau_i \circ (m_B \ot \mathbf{1}) &= (\mathbf{1} \ot m_B) \circ (\tau_i \ot \mathbf{1}) \circ (\mathbf{1} \ot \tau_i) \text{ and } \\
\tau_i \circ (\mathbf{1} \ot m_A) &= (m_A \ot \mathbf{1}) \circ (\mathbf{1} \ot \tau_i) \circ (\tau \ot \mathbf{1}),
\end{align*}
as maps on $B \ot B \ot A$ and on $B \ot A \ot A$, respectively. 

 \item[(c)] Each square in the above diagram commutes. 
\end{enumerate}
\noindent
Consequently, 
$\k$ and its resolution $P_{\bu}(A)$ are compatible with $\tau$. 
\end{lemma}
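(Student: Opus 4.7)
The plan is to attack the three parts by direct computation on the basis $\{y^r\ot x^\ell\}$, using just two combinatorial identities: the Chu--Vandermonde identity $\binom{r+s}{v}=\sum_{t+u=v}\binom{r}{t}\binom{s}{u}$ and its rising-factorial analogue $[a+b]^{[v]}=\sum_{t+s=v}\binom{v}{t}[a]^{[t]}[b]^{[s]}$. Part (a) is a special alternating-sign case of the same algebra, part (b) is the substantive content, and part (c) will follow from (b) once we specialize to the differentials $x\cdot$ and $x^{p-1}\cdot$ and track how characteristic $p$ enters.

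For part (a), I would compose $\tau_i$ with the claimed inverse on $x^\ell\ot y^r$. After reindexing by $u=t+s$ and using the two identities $[\ell]^{[t]}[\ell+t]^{[u-t]}=[\ell]^{[u]}$ and $\binom{r}{t}\binom{r-t}{u-t}=\binom{r}{u}\binom{u}{t}$, the coefficient of $x^{\ell+u}\ot y^{r-u}$ factors as $[\ell]^{[u]}(1/2)^u\binom{r}{u}\sum_{t=0}^u(-1)^t\binom{u}{t}$, and the alternating binomial sum is $\delta_{u,0}$, so the composite reduces to the identity. The odd case is identical after replacing $[\ell]^{[t]}$ by $[\ell+1]^{[t]}$ throughout. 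For part (b), applying the $m_B$-identity to $y^r\ot y^s\ot x^\ell$ and collecting like terms reduces the required equality to Chu--Vandermonde combined with $[\ell]^{[t]}[\ell+t]^{[u]}=[\ell]^{[t+u]}$ (or its shift by $1$ when $i$ is odd). Applying the $m_A$-identity to $y^r\ot x^\ell\ot x^{\ell'}$ and collecting reduces instead to the rising-factorial Vandermonde identity, with $\ell'$ replaced by $\ell'+1$ for $i$ odd.

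For part (c), I would check each square directly. The even-to-odd squares, whose differential is $x\cdot$, are immediate: both $\tau(y^r\ot x^{\ell+1})$ and $(x\cdot\ot\mathbf{1})\tau_1(y^r\ot x^\ell)$ evaluate to $\sum_t\binom{r}{t}(1/2)^t[\ell+1]^{[t]}x^{\ell+1+t}\ot y^{r-t}$, because the shift $\ell\mapsto\ell+1$ baked into the odd formula for $\tau_i$ matches the shift produced by the extra factor of $x$. The odd-to-even squares, with differential $x^{p-1}\cdot$, are where the characteristic enters: on the left, $\tau_1(y^r\ot x^{\ell+p-1})$ vanishes for $\ell\ge 1$ since $x^p=0$ in $A$, and for $\ell=0$ only the $t=0$ term survives because $[p]^{[t]}\equiv 0\pmod p$ for $t\ge 1$; on the right, $(x^{p-1}\cdot\ot\mathbf{1})\tau_2(y^r\ot x^\ell)$ vanishes unless $\ell=t=0$, using $x^p=0$ to kill terms with $\ell+t\ge 1$ and $[0]^{[t]}=0$ to kill the rest. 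Both sides reduce to $x^{p-1}\ot y^r$ when $\ell=0$. The rightmost augmentation square is trivial from $\varepsilon(x^{\ell+t})=\delta_{\ell,0}\delta_{t,0}$. The main obstacle I foresee is precisely this odd-to-even square, where one must recognize that two distinct vanishing mechanisms — the relation $x^p=0$ in the module $A$ and the mod-$p$ vanishing of the rising factorial $[p]^{[t]}$ — conspire to produce matching zero patterns on the two sides.
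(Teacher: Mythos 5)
Your proposal is correct and follows essentially the same route as the paper's proof: direct evaluation on the basis $\{y^r\ot x^\ell\}$, reduction of (a) and (b) to Chu--Vandermonde together with the rising-factorial identities $[a]^{[t]}[a+t]^{[s]}=[a]^{[t+s]}$ and $[\ell_1+\ell_2+1]^{[t]}=\sum_k\binom{t}{k}[\ell_1]^{[k]}[\ell_2+1]^{[t-k]}$ (plus the alternating binomial sum for the inverse), and the same two vanishing mechanisms ($x^p=0$ in $A$, and $[p]^{[t]}\equiv 0$, $[0]^{[t]}=0$ in characteristic $p$) for the odd-to-even squares in (c).
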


\begin{proof} 
Let $\tau_i$ be defined as in the lemma.\medskip

\noindent
(a) \textbf{Bijection:} To show $\tau_i$ is bijective with the given inverse, it suffices to show that $\tau_i \circ \tau_i^{-1}= \mathbf{1}_{A \ot B}$ and $\tau_i^{-1} \circ \tau_i = \mathbf{1}_{B \ot A}$. Here, we will check $\tau_i \circ \tau_i^{-1}= \mathbf{1}_{A \ot B}$ for the case when $i$ is odd; the remaining case is similar. 
\begin{align*}
\tau_i \circ \tau_i^{-1}(x^\ell \ot y^r) &= \tau_i \left( \sum_{k=0}^r {r \choose k} \left(- \frac{1}{2}\right)^k [\ell+1]^{[k]} \ y^{r-k} \ot x^{\ell+k} \right) \\
&= \sum_{k=0}^r {r \choose k} \left(- \frac{1}{2}\right)^k [\ell+1]^{[k]} \sum_{s=0}^{r-k} {{r-k} \choose s} \left(\frac{1}{2}\right)^s [\ell+k+1]^{[s]} \ x^{\ell+k+s} \ot y^{r-k-s} \\
&= \sum_{k=0}^r \sum_{s=0}^{r-k} {r \choose k} {{r-k} \choose s} \left(- \frac{1}{2}\right)^k \left(\frac{1}{2}\right)^s [\ell+1]^{[k]}  [\ell+k+1]^{[s]} \ x^{\ell+k+s} \ot y^{r-k-s} \\
&= \sum_{t=0}^r \left( \sum_{k+s=t} {r \choose k} {{r-k} \choose s} (-1)^k \left(\frac{1}{2}\right)^{k+s} [\ell+1]^{[k]}  [\ell+k+1]^{[s]} \right)\ x^{\ell+t} \ot y^{r-t}.
\end{align*}
It suffices to show 
\[
 \sum_{k+s=t} {r \choose k} {{r-k} \choose s} (-1)^k \left(\frac{1}{2}\right)^{k+s}  [\ell+1]^{[k]}  [\ell+k+1]^{[s]} = \begin{cases}1, & \quad \text{ if } t=0 \\ 0, & \quad \text{ if } t \neq 0. \end{cases}
\]

The case $t=0$ is clear, as $k=s=0$, all coefficients become $1$. For $t > 0$, 
\begin{align*}
\sum_{k=0}^t &{r \choose k} {{r-k} \choose t-k} (-1)^k \left(\frac{1}{2}\right)^t  [\ell+1]^{[k]}  [\ell+k+1]^{[t-k]} \\
&= \frac{(\ell+t)!}{\ell!} \left(\frac{1}{2}\right)^t \left( \sum_{k=0}^t (-1)^k {r \choose k} {{r-k} \choose t-k}   \right) \\
&= \frac{(\ell+t)!}{\ell!} \left(\frac{1}{2}\right)^t \left( \sum_{k=0}^t (-1)^k {r \choose t} {t \choose k} \right) \\
&= \frac{(\ell+t)!}{\ell!} \left(\frac{1}{2}\right)^t {r \choose t} \left( \sum_{k=0}^t (-1)^k {t \choose k} \right) = 0. 
\end{align*}
Therefore, $\tau_i \circ \tau_i^{-1}= \mathbf{1}_{A \ot B}$. Similarly,  $\tau_i^{-1} \circ \tau_i = \mathbf{1}_{B \ot A}$ and hence $\tau_i$ is bijective.\medskip

\noindent
(b) \textbf{Compatible conditions:}
To show that the maps $\tau_i$ satisfy conditions \eqref{compatible twisting1} and \eqref{compatible twisting2}, observe that if $i$ is $0$ or even, then both conditions hold from the definition of $\tau$, as $\tau_i=\tau$ in this case and satisfies (\ref{eqn:twist-def}). It remains to check the case when $i$ is odd. 

Let us verify \eqref{compatible twisting1} first. On $B \ot B \ot A$, the left hand side is
\[
\tau_i \circ (m_B \ot \mathbf{1})(y^{r_1} \ot y^{r_2} \ot x^\ell) = \tau_i(y^{r_1+r_2} \ot x^\ell) = \sum_{t=0}^{r_1+r_2} {{r_1+r_2} \choose t} \left(\frac{1}{2}\right)^t [\ell+1]^{[t]} \ x^{\ell+t} \ot y^{r_1+r_2-t},
\]
while the right hand side is
\begin{align*}
&(\mathbf{1} \ot m_B) \circ (\tau_i \ot \mathbf{1}) \circ (\mathbf{1} \ot \tau_i)(y^{r_1} \ot y^{r_2} \ot x^\ell) \\
&= (\mathbf{1} \ot m_B) \circ (\tau_i \ot \mathbf{1})\left(y^{r_1} \ot \sum_{k=0}^{r_2} {{r_2} \choose k} \left(\frac{1}{2}\right)^k [\ell+1]^{[k]} \ x^{\ell+k} \ot y^{r_2-k} \right) \\
&=(\mathbf{1} \ot m_B)\left[\sum_{k=0}^{r_2} {{r_2} \choose k} \left(\frac{1}{2}\right)^k [\ell+1]^{[k]} 
\left(\sum_{s=0}^{r_1} {{r_1} \choose s} \left(\frac{1}{2}\right)^s [\ell+k+1]^{[s]} \ x^{\ell+k+s} \ot y^{r_1-s} \right) \ot y^{r_2-k} \right] \\
&=\sum_{t=0}^{r_1 + r_2} \left( \sum_{k+s=t} {{r_2} \choose k} {{r_1} \choose s} \left(\frac{1}{2}\right)^t [\ell+1]^{[k]} \, [\ell+k+1]^{[s]} \right)\ x^{\ell+t} \ot y^{r_1+r_2-t}. 
\end{align*}
It is straightforward to check that $[\ell+1]^{[k]} \,  [\ell+k+1]^{[s]} = [\ell+1]^{[k+s]}$, and 
$$\sum_{k=0}^t {r_2 \choose k} {{r_1} \choose {t-k}} = {{r_1+r_2} \choose t}.$$
This gives us the equality \eqref{compatible twisting1} as desired. 

For \eqref{compatible twisting2}, on $B \ot A \ot A$, the left hand side is
\[
\tau_i \circ (\mathbf{1} \ot m_A)(y^r \ot x^{\ell_1} \ot x^{\ell_2}) = \tau_i(y^r \ot x^{\ell_1+\ell_2}) = \sum_{t=0}^r {r \choose t} \left(\frac{1}{2}\right)^t [\ell_1+\ell_2+1]^{[t]} \ x^{\ell_1+\ell_2+t} \ot y^{r-t},
\]
while the right hand side is
\begin{align*}
&(m_A \ot \mathbf{1}) \circ (\mathbf{1} \ot \tau_i) \circ (\tau \ot \mathbf{1})(y^r \ot x^{\ell_1} \ot x^{\ell_2}) \\
&= (m_A \ot \mathbf{1}) \circ (\mathbf{1} \ot \tau_i) \left[ \left( \sum_{k=0}^r {r \choose k} \left(\frac{1}{2}\right)^k [\ell_1]^{[k]} \ x^{\ell_1+k} \ot y^{r-k} \right) \ot x^{\ell_2} \right] \\
&=(m_A \ot \mathbf{1}) \left[ \sum_{k=0}^r {r \choose k} \left(\frac{1}{2}\right)^k [\ell_1]^{[k]} \ x^{\ell_1+k} \ot \left( \sum_{s=0}^{r-k} {{r-k} \choose s} \left(\frac{1}{2}\right)^s [\ell_2+1]^{[s]} \ x^{\ell_2+s} \ot y^{r-k-s} \right) \right] \\
&=\sum_{k=0}^r \sum_{s=0}^{r-k} {r \choose k} {{r-k} \choose s} \left(\frac{1}{2}\right)^{k+s} [\ell_1]^{[k]} \  [\ell_2+1]^{[s]} \ x^{(\ell_1+\ell_2)+(k+s)} \ot y^{r-(k+s)} \\
&=\sum_{t=0}^r \left(\sum_{k+s=t} {r \choose k} {{r-k} \choose s} \left(\frac{1}{2}\right)^t [\ell_1]^{[k]} \  [\ell_2+1]^{[s]} \right) x^{(\ell_1+\ell_2)+t} \ot y^{r-t}.
\end{align*}

To show the left hand side of \eqref{compatible twisting2} is equal to its right hand side, it suffices to show 
$${r \choose t}[\ell_1+\ell_2+1]^{[t]} = \sum_{k=0}^t {r \choose k} {{r-k} \choose {t-k}} [\ell_1]^{[k]} \  [\ell_2+1]^{[t-k]}.$$
Using binomial identity in \cite[p.~4038]{CLW}, $[\ell_1+\ell_2+1]^{[t]} = \sum_{k=0}^t {t \choose k} [\ell_1]^{[k]} \ [\ell_2+1]^{[t-k]}$, we have 
\begin{align*}
{r \choose t}[\ell_1+\ell_2+1]^{[t]} &= \sum_{k=0}^t {r \choose t} {t \choose k} [\ell_1]^{[k]} \  [\ell_2+1]^{[t-k]} \\
&= \sum_{k=0}^t {r \choose k} {{r-k} \choose {t-k}} [\ell_1]^{[k]} \  [\ell_2+1]^{[t-k]},
\end{align*}
where the last equality is due to 
\begin{align*}
{r \choose t} {t \choose k} &= \frac{r!}{(r-t)!\, t!} \cdot \frac{t!}{(t-k)! \,k!} = \frac{r!}{(r-t)!\, (t-k)!\, k!} \\
&= \frac{r!}{k!\, (r-k)!} \cdot \frac{(r-k)!}{(t-k)! \, (r-t)!} = {r \choose k} {{r-k} \choose {t-k}}.
\end{align*}
This gives us the equality \eqref{compatible twisting2} as desired.\medskip

\noindent
(c) \textbf{Commutativity of diagram:} We need to check that the following diagrams commute: 
\[
\begin{xy}*!C\xybox{
\xymatrixcolsep{3.3pc}\xymatrix{
  B\ot A\ar[d]^{\tau_{\text{odd}}}\ar[r]^{\mathbf{1} \ot x \cdot} & B\ot A \ar[d]^{\tau_{\text{even}}} \\
 A\ot B\ar[r]^{x \cdot \ot \mathbf{1}} & A\ot B
}}
\end{xy} \quad  \quad \text{ and } \quad \quad
\begin{xy}*!C\xybox{
\xymatrixcolsep{3.3pc}\xymatrix{
  B\ot A\ar[d]^{\tau_{\text{even}}}\ar[r]^{\mathbf{1} \ot x^{p-1} \cdot } & B\ot A \ar[d]^{\tau_{\text{odd}}} \\
 A\ot B\ar[r]^{x^{p-1} \cdot \ot \mathbf{1}} & A\ot B.
}}
\end{xy}
\]

For the first diagram, we have:
\begin{align*}
\tau_{\text{even}} \circ (\mathbf{1} \ot x \cdot)(y^r \ot x^\ell) &= \tau (y^r \ot x^{\ell+1}) \\ 
&= \sum_{t=0}^r {r \choose t} \left(\frac{1}{2}\right)^t [\ell+1]^{[t]} \ x^{\ell+1+t} \ot y^{r-t}, \\
(x \cdot \ot \mathbf{1}) \circ \tau_{\text{odd}}(y^r \ot x^\ell) &= (x \cdot \ot \mathbf{1}) \left( \sum_{t=0}^r {r \choose t} \left(\frac{1}{2}\right)^t [\ell+1]^{[t]} \ x^{\ell+t} \ot y^{r-t} \right) \\
&= \sum_{t=0}^r {r \choose t} \left(\frac{1}{2}\right)^t [\ell+1]^{[t]} \ x^{\ell+1+t} \ot y^{r-t}.
\end{align*}

Similarly, for the second diagram:
\begin{align*}
\tau_{\text{odd}} \circ (\mathbf{1} \ot x^{p-1} \cdot)(y^r \ot x^\ell) &= \tau_{\text{odd}} (y^r \ot x^{\ell+p-1}) \\
&= \begin{cases}
   0 & \text{ if } \ell > 0 \\
   \tau_{\text{odd}} (y^r \ot x^{p-1})  & \text{ if } \ell = 0
\end{cases}  \\
&= \begin{cases}
   0 & \text{ if } \ell > 0 \\
   \sum_{t=0}^r {r \choose t} \left(\frac{1}{2}\right)^t [p]^{[t]} \ x^{p-1+t} \ot y^{r-t}  & \text{ if } \ell = 0
\end{cases} \\ 
&= \begin{cases}
   0 & \text{ if } \ell > 0 \\
   x^{p-1} \ot y^{r}  & \text{ if } \ell = 0
\end{cases} \\ 
(x^{p-1} \cdot \ot \mathbf{1}) \circ \tau_{\text{even}}(y^r \ot x^\ell) &= (x^{p-1} \cdot \ot \mathbf{1}) \left( \sum_{t=0}^r {r \choose t} \left(\frac{1}{2}\right)^t [\ell]^{[t]} \ x^{\ell+t} \ot y^{r-t} \right) \\
&= \sum_{t=0}^r {r \choose t} \left(\frac{1}{2}\right)^t [\ell]^{[t]} \ x^{\ell+t+p-1} \ot y^{r-t} \\
&= \begin{cases}
    0 & \text{ if } \ell > 0 \\
    x^{p-1} \ot y^{r} & \text{ if } \ell=0. \\
\end{cases}
\end{align*}
\end{proof}

For each $i$, the map $\tau_i$ is used to give $P_i(A) \ot P_j(B) = A\ot B$ the structure
of a left $(A\ot_{\tau} B)$-module.
In the case when $i$ is even, this is the usual $(A\ot _{\tau} B)$-module structure.
In the case when $i$ is odd, the $(A\ot_{\tau}B)$-module structure via $\tau_i$
is given by
\[
   \begin{xy}*!C\xybox{\xymatrixcolsep{2pc}
\xymatrix{
  & (A\ot_{\tau} B) \ot (A\ot B) \ar[rr]^{\quad \mathbf{1} \ot \tau_i\ot \mathbf{1}} 
   && A\ot A \ot B\ot B \ar[rr]^{\qquad m_A\ot m_B} && A\ot B.
}}
\end{xy}
\]

\begin{lemma}
Retaining the above module structure, $P_i(A) \ot P_j(B) = A\ot B$ is a free $(A\ot_{\tau}B)$-module 
of rank one, generated by $1\ot 1$, via the $(A\ot_{\tau}B)$-module isomorphism $\varphi: A \ot_\tau B \rightarrow A \ot B$ given by 
$$ \varphi(x^\ell \ot y^r) = \begin{cases} x^\ell \ot y^r, & \quad i \text{ is even} \\
\sum_{t=0}^r {r \choose t} \frac{t!}{2^t} \ x^{\ell+t} \ot y^{r-t}, & \quad i \text{ is odd,}
\end{cases}$$
whose inverse is given by
$$ \varphi^{-1}(x^\ell \ot y^r) = \begin{cases} x^\ell \ot y^r, & \quad i \text{ is even} \\
x^\ell \ot y^r - \frac{r}{2}x^{\ell+1} \ot y^{r-1}, & \quad i \text{ is odd.}
\end{cases}$$
\end{lemma}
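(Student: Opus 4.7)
The plan is to verify two things at once: that $\varphi$ with the stated inverse is a $\k$-linear bijection, and that it is $(A\ot_\tau B)$-linear when the codomain $A\ot B$ carries the module structure defined via $\tau_i$. In the even case there is nothing to check, since $\tau_i=\tau$ makes the twisted action coincide with the regular left action of $A\ot_\tau B$ on itself, and $\varphi$ is the identity. So the entire content lies in the odd case.

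The crucial observation is that the formula defining $\varphi$ in the odd case is nothing other than the evaluation of the twisted action on the element $1\ot 1$. Unfolding the definition of the module structure gives
\[
(x^\ell\ot y^r)\cdot(1\ot 1) \;=\; (m_A\ot m_B)\bigl(x^\ell\ot \tau_i(y^r\ot 1)\ot 1\bigr),
\]
and evaluating the odd-case formula for $\tau_i$ at $\ell=0$ (using $[1]^{[t]}=t!$) yields
$\tau_i(y^r\ot 1)=\sum_{t=0}^r \binom{r}{t}\frac{t!}{2^t}\,x^t\ot y^{r-t}$, so the right-hand side equals exactly the claimed $\varphi(x^\ell\ot y^r)$. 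Because the twisted action was shown in the previous lemma to satisfy the compatible-twisting conditions \eqref{compatible twisting1} and \eqref{compatible twisting2} (which are the associativity conditions for the induced $(A\ot_\tau B)$-action, by \cite[Lemmas 5.8, 5.9]{SW}), and since $\varphi$ coincides with $z\mapsto z\cdot(1\ot 1)$, it is automatically $(A\ot_\tau B)$-linear and sends the cyclic generator $1\ot 1$ to $1\ot 1$. Hence, once $\varphi$ is shown bijective, the module $A\ot B$ is free of rank one on $1\ot 1$ as asserted.

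It only remains to verify that $\varphi$ is bijective by checking $\varphi\circ\varphi^{-1}=\mathbf{1}=\varphi^{-1}\circ\varphi$ on basis elements. For the first composition one expands $\varphi(x^\ell\ot y^r)-\frac{r}{2}\varphi(x^{\ell+1}\ot y^{r-1})$ and collects coefficients of $x^{\ell+t}\ot y^{r-t}$: the $t=0$ term is $1$, and for $t\ge 1$ the coefficient is
\[
\binom{r}{t}\frac{t!}{2^t}-\frac{r}{2}\binom{r-1}{t-1}\frac{(t-1)!}{2^{t-1}}=\frac{r!}{(r-t)!\,2^t}-\frac{r!}{(r-t)!\,2^t}=0.
\]
The reverse composition is entirely analogous (one collects the coefficient of $x^{\ell+s}\ot y^{r-s}$ in $\sum_t\binom{r}{t}\frac{t!}{2^t}[x^{\ell+t}\ot y^{r-t}-\tfrac{r-t}{2}x^{\ell+t+1}\ot y^{r-t-1}]$, with $t=s$ and $t=s-1$ the only contributing indices, and again the two terms cancel via the same binomial identity). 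This constitutes the sum total of the work; I do not anticipate any real obstacle, since the main insight — that $\varphi$ is nothing but the action-on-cyclic-generator map — reduces the lemma to these two small binomial identities.
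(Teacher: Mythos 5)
Your proposal is correct and follows essentially the same route as the paper: bijectivity is verified by the same binomial cancellation in $\varphi\circ\varphi^{-1}$ (the paper checks just this one composition, which suffices by finite-dimensionality), and module-linearity is deduced from the compatibility conditions established in the preceding lemma. Your explicit identification of $\varphi$ with the action map $z\mapsto z\cdot(1\ot 1)$, using $[1]^{[t]}=t!$, is a nice way of spelling out what the paper compresses into ``By \eqref{compatible twisting2}, $\varphi$ is a module isomorphism.''
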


\begin{proof}
When $i$ is even, $\varphi$ is clearly bijective.
We check that $\varphi$ is a bijection with the given inverse
when $i$ is odd:
\begin{align*}
\varphi \circ \varphi^{-1}(x^\ell \ot y^r) &= \varphi \left( x^\ell \ot y^r - \frac{r}{2}x^{\ell+1} \ot y^{r-1} \right) \\
&= \sum_{t=0}^r {r \choose t} \frac{t!}{2^t} \ x^{\ell+t} \ot y^{r-t} - \frac{r}{2} \sum_{k=0}^{r-1} {{r-1} \choose k} \frac{k!}{2^k} \ x^{\ell+1+k} \ot y^{r-1-k} \\
&= x^\ell \ot y^r + \sum_{t=1}^r {r \choose t} \frac{t!}{2^t} \ x^{\ell+t} \ot y^{r-t} - \sum_{t=1}^r r{{r-1} \choose {t-1}} \frac{(t-1)!}{2^t} \ x^{\ell+t} \ot y^{r-t} \\
&= x^\ell \ot y^r + \sum_{t=1}^r \left( {r \choose t}t - r{{r-1} \choose {t-1}} \right) \frac{(t-1)!}{2^t} \ x^{\ell+t} \ot y^{r-t} \\
&= x^\ell \ot y^r + \sum_{t=1}^r (0) \frac{(t-1)!}{2^t} \ x^{\ell+t} \ot y^{r-t} =x^\ell \ot y^r. 
\end{align*}
By \eqref{compatible twisting2}, $\varphi$ is a module isomorphism. Therefore $A\ot B$ is free as an $(A \ot_{\tau}B)$-module.
\end{proof}

In particular, the following is useful for our computations later:
\begin{equation}  \label{varphi}
\begin{split}
\varphi^{-1}(1 \ot y) &= \begin{cases} 1 \ot y, & \quad i \text{ is even} \\
1 \ot y - \frac{1}{2} x \ot 1, & \quad i \text{ is odd},
\end{cases} \\
\varphi^{-1}(1 \ot y^{p-1}) &= \begin{cases} 1 \ot y^{p-1}, & \quad i \text{ is even} \\ 
1 \ot y^{p-1} + \frac{1}{2} x \ot y^{p-2}, & \quad i \text{ is odd.} 
\end{cases}
\end{split}
\end{equation}

By Theorem~\ref{thm:twisted resoln}, the total complex $K_{\bu} := \mbox{Tot}(P_{\bu}(A)
\ot P_{\bu}(B))$ is a free resolution of $\k$ as $A\ot_{\tau}B$-module. 
For each $i,j \geq 0$, let $\phi_{i,j}$ denote the free generator $1\ot 1$
of $P_i(A)\ot P_j(B)$ as an $A\ot_{\tau}B$-module. 
Then as an $R$-module:
\[
    K_n = \bigoplus_{i+j=n} R \, \phi_{i,j} .
\]

Recall that the differentials of this total complex are 
$d_n = \sum_{i+j=n} (d_i\ot 1 + (-1)^i \ot d_j)$. 
As $P_i(A)\ot P_j(B)$ is free as an $A\ot_{\tau}B$-module with generator 
$\phi_{i,j}$, we can write the image of $\phi_{i,j}$ under the differential map 
as the action of $A\ot_{\tau}B$ on $\phi_{i,j}$, using values of the inverse map 
$\varphi^{-1}$ defined in (\ref{varphi}) where needed.
We express the differential on elements via this notation: 
\[
    d(\phi_{i,j}) = \left\{\begin{array}{ll}
    x^{p-1} \phi_{i-1,j} + y^{p-1} \phi_{i,j-1} & \mbox{ if $i,j$ are even}, \vspace{0.5em} \\
   x^{p-1}\phi_{i-1,j} + y \phi_{i,j-1} & \mbox{ if  $i$ is even and $j$ is odd}, \vspace{0.5em} \\
  x\phi_{i-1,j} - (y^{p-1}+ \frac{1}{2}xy^{p-2}) \phi_{i,j-1} & \mbox{ if $i$ is odd and $j$ is even}, \vspace{0.5em} \\
   x\phi_{i-1,j} - (y-\frac{1}{2}x)\phi_{i,j-1} & \mbox{ if $i,j$ are odd} . 
\end{array}\right.
\]
We interpret $\phi_{i,j}$ to be 0 if either $i$ or $j$ is negative.


\subsection{Resolution over the bosonization $R\# \kG$}
\label{subsec:resoln bosonization}

Again we take $\k$ to be a field of characteristic $p>2$ and
$R$, $G$ as described in Section~\ref{subsec:settingR}.
For the group $G=\langle g \rangle \cong \mathbb{Z}/q\mathbb{Z}$, where $q$ is 
divisible by $p$, we define an action of $G$ 
on the $R$-complex $K_{\bu}$ 
constructed in Section~\ref{subsec:resoln R}, 
for the purpose of forming a twisted tensor product resolution of $K_{\bu}$ 
with a resolution of $\k$ as $\kG$-module.
The group action will give us a twisting map on the complex 
$K_{\bu}$ analogous to the twisting map defining a skew group algebra. 
The resulting complex will give us a resolution of $\k$ over 
the bosonization $R\# \kG$. \\

Let $R=A\ot_{\tau} B$ be the twisted tensor product with $A:=\k[x]/(x^p)$, $B:=\k[y]/(y^p)$ and $\tau: B\ot A \rightarrow A\ot B$ as defined before. Under our setting in characteristic $p>2$ and relation $yx=xy+ \frac{1}{2}x^2$ in $R$, we obtain the following relations in $R$ (here, we drop the tensor symbols and write $xy$ in place of $x\ot y$ in $R$): 

\begin{prop} \label{propR}
The following relations hold in $R$: 
\begin{enumerate}
 \item For any integer $\ell \geq 0$, $\displaystyle yx^\ell = x^\ell y + \frac{\ell}{2}x^{\ell + 1}$. 
 \item For any integer $n \geq 1$, $\displaystyle (x+y)^n = \sum_{i=0}^n {n \choose i} \frac{(i+1)!}{2^i}\ x^i y^{n-i}$.
\end{enumerate}
\end{prop}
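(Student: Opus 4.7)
The plan is to prove both identities by induction, with part (1) supplying the key commutation rule that makes part (2) tractable.

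For \textbf{Part (1)}, I induct on $\ell$. The base case $\ell=0$ is trivial, and $\ell=1$ is precisely the defining relation $yx = xy + \tfrac{1}{2}x^2$ of $R$. For the induction step, I write $yx^{\ell+1} = (yx^\ell)x$, apply the inductive hypothesis to get $(x^\ell y + \tfrac{\ell}{2}x^{\ell+1})x$, and then apply the defining relation once more to the $yx$ sitting inside $x^\ell(yx)$. The two contributions to the coefficient of $x^{\ell+2}$ combine as $\tfrac{1}{2} + \tfrac{\ell}{2} = \tfrac{\ell+1}{2}$, giving the claim. No characteristic obstructions arise since $p>2$ ensures $2$ is invertible in $\k$.

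For \textbf{Part (2)}, I again induct on $n$; the base case $n=1$ is immediate. For the induction step, I left-multiply the inductive expansion of $(x+y)^n$ by $x+y$. The $x$-contribution simply shifts the $x$-exponent up by one. The $y$-contribution requires commuting $y$ past $x^i$, and here I invoke part (1) to write
\[
y x^i y^{n-i} = x^i y^{n-i+1} + \tfrac{i}{2} x^{i+1} y^{n-i}.
\]
Collecting terms, the coefficient of $x^{i+1}y^{n-i}$ built from both sources combines into $\binom{n}{i}\tfrac{(i+1)!}{2^i}\cdot \tfrac{i+2}{2} = \binom{n}{i}\tfrac{(i+2)!}{2^{i+1}}$. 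Re-indexing $j=i+1$ in this sum and merging with the remaining $x^i y^{n-i+1}$ sum via Pascal's identity $\binom{n}{j-1} + \binom{n}{j} = \binom{n+1}{j}$ yields exactly the coefficient $\binom{n+1}{j}\tfrac{(j+1)!}{2^j}$ at level $n+1$. A quick separate check confirms the boundary contributions at $j=0$ and $j=n+1$ also match.

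I do not expect serious obstacles: the argument is pure induction plus the quadratic commutation relation, and the invertibility of $2$ in characteristic $p>2$ is the only arithmetic input. The principal care needed is the bookkeeping of binomial and factorial coefficients in the combined sum of part (2); the ``trick'' is simply to compute the $(n+1)$-st power by left-multiplication rather than right-multiplication, so that part (1) can be applied once per term without iteration.
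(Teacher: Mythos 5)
Your proof is correct and follows essentially the same route as the paper: induction on $\ell$ using the defining relation for part (1), and induction on $n$ via left-multiplication by $x+y$, using part (1) to commute $y$ past $x^i$ and Pascal's identity to merge the resulting sums, for part (2). The coefficient bookkeeping $\binom{n}{i}\tfrac{(i+1)!}{2^i}\cdot\tfrac{i+2}{2}=\binom{n}{i}\tfrac{(i+2)!}{2^{i+1}}$ matches the paper's computation exactly.
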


\begin{proof} 
(1) This was shown right after \cite[Lemma 3.8]{CLW}; we provide the proof here for completeness. We proceed by induction on $\ell$. Case $\ell=0$ is trivial. Case $\ell=1$ is from the relation $yx=xy+ \frac{1}{2}x^2$. Now assume the statement holds up to $\ell-1$. Then by the induction hypothesis, we have:
\begin{align*} 
yx^\ell &= (yx^{\ell-1})x = (x^{\ell-1} y + \frac{\ell-1}{2}x^\ell) x = x^{\ell-1} yx + \frac{\ell-1}{2}x^{\ell + 1} \\ 
&= x^{\ell-1}(xy+ \frac{1}{2}x^2) + \frac{\ell-1}{2}x^{\ell + 1} = x^\ell y + \frac{1}{2}x^{\ell + 1} + \frac{\ell-1}{2}x^{\ell + 1} = x^\ell y + \frac{\ell}{2}x^{\ell + 1}.
\end{align*}

(2) We proceed by induction on $n$. Case $n=1$ is trivial. Assume the assertion is true for up to $n-1$. Then by part (1), we have:
\begin{align*}
(x+y)^n &= (x+y) (x+y)^{n-1} = (x+y) \left( \sum_{i=0}^{n-1} {{n-1} \choose i} \frac{(i+1)!}{2^i}\ x^i y^{n-1-i} \right) \\
&= \sum_{i=0}^{n-1} {{n-1} \choose i} \frac{(i+1)!}{2^i}\ x^{i+1} y^{n-1-i} + \sum_{i=0}^{n-1} {{n-1} \choose i} \frac{(i+1)!}{2^i}\ (yx^i) y^{n-1-i} \\
&= \sum_{i=0}^{n-1} {{n-1} \choose i} \frac{(i+1)!}{2^i}\ x^{i+1} y^{n-1-i} + \sum_{i=0}^{n-1} {{n-1} \choose i} \frac{(i+1)!}{2^i}\ (x^iy+\frac{i}{2}x^{i+1}) y^{n-1-i} \\
&= \sum_{i=0}^{n-1} {{n-1} \choose i} \frac{(i+1)!}{2^i}\ x^{i+1} y^{n-1-i} + \sum_{i=0}^{n-1} {{n-1} \choose i} \frac{(i+1)!}{2^i}\ x^i y^{n-i} \\
 &\qquad \qquad \qquad \qquad \qquad + \sum_{i=0}^{n-1} {{n-1} \choose i} \frac{(i+1)!}{2^i}\ \frac{i}{2} \ x^{i+1}y^{n-1-i} \\
&= \sum_{i=0}^{n-1} {{n-1} \choose i} \frac{(i+2)!}{2^{i+1}} \ x^{i+1} y^{n-1-i} + \sum_{i=0}^{n-1} {{n-1} \choose i} \frac{(i+1)!}{2^i}\ x^i y^{n-i} \\
&= \sum_{i=1}^{n} {{n-1} \choose {i-1}} \frac{(i+1)!}{2^i} \ x^i y^{n-i} + \sum_{i=1}^{n-1} {{n-1} \choose i} \frac{(i+1)!}{2^i}\ x^i y^{n-i} + y^n \\
&= \sum_{i=0}^n {n \choose i} \frac{(i+1)!}{2^i}\ x^i y^{n-i},
\end{align*}
where the last equality is due to the recursive formula of the binomial coefficients 
$$\displaystyle {n \choose i}  = {{n-1} \choose {i-1}}+{{n-1} \choose i}, \text{ for all } 1 \leq i \leq n-1.$$ 
\end{proof}

\begin{lemma}
\label{alpha}
Set an element $$\alpha:= -y^{p-2} + \sum_{i=1}^{p-2} (-1)^{i+1} \frac{(i+1)!}{2^{i+1}} \ x^i y^{p-2-i} \in R.$$
Then $\alpha$ satisfies the following:
\begin{enumerate}
 \item[(a)] $x\alpha = (x+y)^{p-1} - y^{p-1} + \frac{1}{2}x[(x+y)^{p-2} - y^{p-2}]$ , \vskip0.3em
 \item[(b)] $(x+y)\alpha = -y^{p-1} - \frac{1}{2}xy^{p-2}$ , \vskip0.3em
 \item[(c)] $\alpha x = (x+y)^{p-1} - y^{p-1}$ , \vskip0.3em
 \item[(d)] $\alpha(y-\frac{1}{2}x) = - (x+y)^{p-1}$.
\end{enumerate}
\end{lemma}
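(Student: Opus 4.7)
The plan is to verify each of the four identities by direct computation in $R$, using the two results of Proposition~\ref{propR}: the commutation rule $yx^\ell = x^\ell y + \tfrac{\ell}{2}x^{\ell+1}$, and the binomial-type expansion $(x+y)^n = \sum_{i=0}^n \binom{n}{i}\tfrac{(i+1)!}{2^i}x^iy^{n-i}$. Since we work in characteristic $p$, we also exploit the elementary congruences $\binom{p-1}{i} \equiv (-1)^i$ and $\binom{p-2}{i} \equiv (-1)^i(i+1) \pmod p$, which convert $(x+y)^{p-1}$ and $(x+y)^{p-2}$ into signed sums whose monomials are precisely those appearing in $\alpha$; this compatibility is the essential reason the formula for $\alpha$ works.

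For (a), multiplying on the left by $x$ requires no commutation, and after reindexing $j = i+1$ one obtains $x\alpha = -xy^{p-2} + \sum_{j=2}^{p-1}(-1)^j\tfrac{j!}{2^j}x^jy^{p-1-j}$. Expanding the right-hand side via Proposition~\ref{propR}(2) and the mod-$p$ reductions, the $i=1$ piece of $(x+y)^{p-1} - y^{p-1}$ produces exactly $-xy^{p-2}$, while for $j \geq 2$ the identity $(j+1)! - j \cdot j! = j!$ combines the two sums into the same coefficient $(-1)^j\tfrac{j!}{2^j}$. For (c), one must commute $x$ through each $y^{p-2-i}$; iterating Proposition~\ref{propR}(1) gives $y^r x = \sum_{t=0}^r \binom{r}{t}\tfrac{t!}{2^t}x^{t+1}y^{r-t}$ (essentially the twisting map $\tau$ from earlier in the section). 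Substitution produces a double sum, and after setting $j = i+t+1$ and collecting, the mod-$p$ reduction $\binom{p-2-i}{j-1-i} \equiv (-1)^{j-1-i}\tfrac{j!}{(i+1)!(j-1-i)!}$ causes a cascade of cancellations that leaves precisely the coefficients of $(x+y)^{p-1}-y^{p-1}$.

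Parts (b) and (d) then follow formally. For (b), write $(x+y)\alpha = x\alpha + y\alpha$; the term $y\alpha$ is computed directly using $yx^i = x^iy + \tfrac{i}{2}x^{i+1}$, and combining with $x\alpha$ from (a) causes all intermediate coefficients for $j \geq 2$ to cancel (the $j=p-1$ contribution happens to carry a factor $p$, hence vanishes in characteristic $p$), leaving only $-y^{p-1} - \tfrac{1}{2}xy^{p-2}$. For (d), $\alpha(y - \tfrac{1}{2}x) = \alpha y - \tfrac{1}{2}\alpha x$, where $\alpha y$ is merely a $y$-shift of $\alpha$ (no commutation needed) and $\alpha x$ is known from (c); coefficient matching against $-(x+y)^{p-1}$ uses only the mod-$p$ congruences already recorded, with the $j = p-1$ term again killed by a factor of $p$. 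The main obstacle is purely combinatorial bookkeeping: each identity yields sums mixing factorials, powers of $1/2$, and signs, and their collapse requires repeated index shifts together with the binomial identity $\binom{r}{k}\binom{r-k}{t-k} = \binom{r}{t}\binom{t}{k}$ used earlier in the section, with essentially every non-trivial cancellation resting on the characteristic-$p$ congruences above.
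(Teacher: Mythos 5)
Your proposal is correct, and for parts (a) and (b) it is essentially the paper's computation (the paper verifies (a) by expanding the right-hand side via Proposition~\ref{propR}(2) and collapsing coefficients, and computes $(x+y)\alpha$ directly for (b), exactly the kind of coefficient bookkeeping you describe). Where you genuinely diverge is in (c) and (d): you compute $\alpha x$ and $\alpha(y-\tfrac12 x)$ head-on, commuting $x$ leftward through powers of $y$ via the iterated form $y^r x=\sum_t\binom{r}{t}\tfrac{t!}{2^t}x^{t+1}y^{r-t}$ of the twisting relation and then matching coefficients using the congruences $\binom{p-1}{i}\equiv(-1)^i$ and $\binom{p-2-i}{s}\equiv(-1)^s\tfrac{(s+i+1)!}{s!\,(i+1)!}$ (mod $p$); I checked that the resulting double sums do collapse as you claim, e.g.\ the coefficient of $x^jy^{p-1-j}$ in $\alpha x$ comes out to $(-1)^j\tfrac{(j+1)!}{2^j}\equiv\binom{p-1}{j}\tfrac{(j+1)!}{2^j}$. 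The paper instead avoids these right-multiplication expansions entirely: it passes to the domain $\k\langle x,y\rangle/(yx-xy-\tfrac12x^2)$, where $x+y$ is a regular element, and verifies (c) and (d) by left-multiplying by $x+y$, invoking the already-proved identity (b) together with a short computation of $(x+y)\alpha x=-xy^{p-1}$ and $(x+y)^p=y^p$, and then cancelling $x+y$; the identities then descend to $R$. Your route buys uniformity and stays inside $R$, with no need for the (essential) observation that cancellation must happen in the domain rather than in $R$ itself, where $x+y$ is nilpotent and hence a zero divisor; the paper's route buys much shorter computations for (c) and (d) at the cost of that extra conceptual step. Both are valid proofs.
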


\begin{proof}
\textbf{(a):} By Proposition~\ref{propR} (2), we have
\begin{align*}
&(x+y)^{p-1} - y^{p-1} + \frac{1}{2}x[(x+y)^{p-2} - y^{p-2}] \\
&= \sum_{i=1}^{p-1} {{p-1} \choose i} \frac{(i+1)!}{2^i}\ x^i y^{p-1-i} + \frac{1}{2}x \sum_{i=1}^{p-2} {{p-2} \choose i} \frac{(i+1)!}{2^i}\ x^i y^{p-2-i}\\
&= \sum_{i=0}^{p-2} {{p-1} \choose {i+1}} \frac{(i+2)!}{2^{i+1}}\ x^{i+1} y^{p-2-i} + \sum_{i=1}^{p-2} {{p-2} \choose i} \frac{(i+1)!}{2^{i+1}}\ x^{i+1} y^{p-2-i}\\
&= {{p-1} \choose 1}\ x y^{p-2} + \sum_{i=1}^{p-2} \left({{p-1} \choose {i+1}} \frac{(i+2)!}{2^{i+1}} + {{p-2} \choose i} \frac{(i+1)!}{2^{i+1}} \right)\ x^{i+1} y^{p-2-i}\\
&=x \left[- y^{p-2} + \sum_{i=1}^{p-2} \left(\frac{(p-1)!}{(i+1)!(p-i-2)!} \frac{(i+2)!}{2^{i+1}} + \frac{(p-2)!}{i! (p-i-2)!} \frac{(i+1)!}{2^{i+1}} \right)\ x^i y^{p-2-i} \right] \\
&=x \left[- y^{p-2} + \sum_{i=1}^{p-2} [(i+2)(p-i-1) \cdots (p-1) + (i+1)(p-i-1) \cdots (p-2) ] \frac{1}{2^{i+1}}\ x^i y^{p-2-i} \right] \\
&=x \left[- y^{p-2} + \sum_{i=1}^{p-2} [(-1)^{i+1} 1 \cdot 2 \cdots (i+2) + (-1)^i 1\cdot 2 \cdots (i+1)(i+1) ] \frac{1}{2^{i+1}}\ x^i y^{p-2-i} \right] \\
&=x \left[- y^{p-2} + \sum_{i=1}^{p-2} \frac{(-1)^{i+1}(i+1)!}{2^{i+1}}(i+2 -i-1) \ x^i y^{p-2-i} \right] = x \alpha. \\
\end{align*}

\textbf{(b):} By Proposition~\ref{propR} (2), we have
\begin{align*}
(x+y)\alpha &= (x+y) \left( - y^{p-2} + \sum_{i=1}^{p-2} \frac{(-1)^{i+1}(i+1)!}{2^{i+1}} \ x^i y^{p-2-i}  \right) \\
&= - xy^{p-2} + \sum_{i=1}^{p-2} \frac{(-1)^{i+1}(i+1)!}{2^{i+1}} \ x^{i+1} y^{p-2-i} - y^{p-1} \\
    &\qquad \qquad \ + \sum_{i=1}^{p-2} \frac{(-1)^{i+1}(i+1)!}{2^{i+1}} \ (x^iy+\frac{i}{2}x^{i+1}) y^{p-2-i} \\
&= - xy^{p-2} + \sum_{i=2}^{p-1} \frac{(-1)^{i}\, i!}{2^i} \ x^i y^{p-1-i} - y^{p-1} + \sum_{i=1}^{p-2} \frac{(-1)^{i+1}(i+1)!}{2^{i+1}} \ x^iy^{p-1-i} \\
    &\qquad \qquad \ + \sum_{i=2}^{p-1} \frac{(-1)^{i} \, i!}{2^i} \frac{i-1}{2} x^i y^{p-1-i} \\
&= - xy^{p-2} - y^{p-1} + \sum_{i=1}^{p-2} \frac{(-1)^{i+1}(i+1)!}{2^{i+1}} \ x^iy^{p-1-i} + \sum_{i=2}^{p-1} \frac{(-1)^{i} \, i!}{2^i} \frac{i+1}{2} \ x^i y^{p-1-i} \\
&= - xy^{p-2} - y^{p-1} + \frac{2!}{2^2}xy^{p-2} + \sum_{i=2}^{p-2} \left( \frac{(-1)^{i+1}(i+1)!}{2^{i+1}} +\frac{(-1)^{i}(i+1)!}{2^{i+1}} \right) x^i y^{p-1-i} \\
    &\qquad \qquad \qquad + \frac{(-1)^{p-1} \, p!}{2^p}  x^p y^{p-1-(p-1)} \\
&= - y^{p-1} - \frac{1}{2}xy^{p-2}. \\
\end{align*}

We are going to check for identities (c) and (d) in the domain $\k \langle x,y \rangle / (yx-xy-\frac{1}{2}x^2)$. \\

\textbf{(c):} Since $(x+y)$ is a regular element in this domain, to check (c) it suffices to show $(x+y)\alpha x =(x+y)[(x+y)^{p-1} - y^{p-1}]$. As identity (b) holds in the domain and by the relation (3.9) in \cite[Lemma 3.8]{CLW}, we have:
\begin{align*}
 (x+y)\alpha x &= (- y^{p-1} - \frac{1}{2}xy^{p-2}) x = - y^{p-1}x - \frac{1}{2}xy^{p-2}x \\
&= - \sum_{t=0}^{p-1} {{p-1} \choose t } \left(\frac{1}{2} \right)^t [1]^{[t]} \, x^{1+t}y^{p-1-t} - \frac{1}{2}x \sum_{s=0}^{p-2} {{p-2} \choose s } \left(\frac{1}{2} \right)^s [1]^{[s]} \, x^{1+s}y^{p-2-s} \\
&= - \sum_{t=0}^{p-1} {{p-1} \choose t } \left(\frac{1}{2} \right)^t t! \, x^{1+t}y^{p-1-t} - \sum_{s=0}^{p-2} {{p-2} \choose s } \left(\frac{1}{2} \right)^{s+1} s! \, x^{2+s}y^{p-2-s} \\
&= - \sum_{i=1}^{p} {{p-1} \choose {i-1}} \left(\frac{1}{2} \right)^{i-1} (i-1)! \, x^iy^{p-i} - \sum_{i=2}^{p} {{p-2} \choose {i-2}} \left(\frac{1}{2} \right)^{i-1} (i-2)! \, x^iy^{p-i} \\
&= -xy^{p-1} - \sum_{i=2}^{p} \left(\frac{(p-1)!}{(i-1)! (p-i)!} (i-1)! +\frac{(p-2)!}{(i-2)! (p-i)!} (i-2)! \right) \left(\frac{1}{2} \right)^{i-1} x^iy^{p-i} \\
&= -xy^{p-1},
\end{align*}
and 
\begin{align*}
(x+y)[(x+y)^{p-1} - y^{p-1}] &= (x+y)^p - xy^{p-1} - y^p =  \sum_{i=0}^p {p \choose i} \frac{(i+1)!}{2^i}\ x^i y^{p-i} - xy^{p-1} - y^p \\
&=y^p - xy^{p-1} - y^p = - xy^{p-1}.
\end{align*}
Thus, as $(x+y)\alpha x =(x+y)[(x+y)^{p-1} - y^{p-1}]$, we have identity (c) as claimed. \\

\textbf{(d):} By a similar technique as in (c), to check (d) it suffices to show 
$$(x+y)\alpha(y-\frac{1}{2}x) =(x+y)[ - (x+y)^{p-1}].$$ 
As we showed identities (b) and $(x+y)\alpha x=-xy^{p-1}$ hold above, we have
\begin{align*}
(x+y)\alpha(y-\frac{1}{2}x) &= \left(-y^{p-1} - \frac{1}{2}xy^{p-2} \right)y -\frac{1}{2} (x+y)\alpha x \\
&= -y^p - \frac{1}{2}xy^{p-1} - \frac{1}{2}(-xy^{p-1}) = -y^p = -(x+y)^p = (x+y)[ - (x+y)^{p-1}].
\end{align*}
Thus, identity (d) holds. 
\end{proof}

\vspace{1em}

Recall that $G=\langle g \rangle \cong \mathbb{Z}/q\mathbb{Z}$ acts on $R$ by ${}^g x = x$ and ${}^g y = x+y$. We define an action of $G$ on the complex $K_{\bu}$, constructed in Section~\ref{subsec:resoln R}, as follows:
$$
    {}^g \phi_{i,j} = \left\{ \begin{array}{ll}
       \phi_{i,j} , & \mbox{ if } i \mbox{ is odd} \\
       \phi_{i,j} + \phi_{i+1,j-1} , & \mbox{ if } i \mbox{ is even and } j \mbox{ is odd}\\
       \phi_{i,j} + \alpha \, \phi_{i+1,j-1} , & \mbox{ if } i,j \mbox{ are even},
    \end{array}\right. 
$$
and in general for all $0 \leq s \leq q-1$
\begin{align*}
    {}^{g^s} \phi_{i,j} = \left\{ \begin{array}{ll}
    \phi_{i,j} , & \mbox{ if } i \mbox{ is odd} \\
    \phi_{i,j} + s\, \phi_{i+1,j-1} , & \mbox{ if } i \mbox{ is even and } j \mbox{ is odd}\\
    \phi_{i,j} + ({}^{(g^{s-1}+ \cdots + g + 1)} \alpha) \, \phi_{i+1,j-1} , & \mbox{ if } i,j \mbox{ are even} .
    \end{array}\right. 
\end{align*}
Moreover, it is not hard to figure out the following for all $1 \leq s \leq q$
\begin{align}\label{G-Phi}
    {}^{g^{-s}} \phi_{i,j} = \left\{ \begin{array}{ll}
    \phi_{i,j} , & \mbox{ if } i \mbox{ is odd} \\
    \phi_{i,j} -s\, \phi_{i+1,j-1} , & \mbox{ if } i \mbox{ is even and } j \mbox{ is odd}\\
    \phi_{i,j} - ({}^{(g^{-s}+ \cdots + g^{-1})} \alpha) \, \phi_{i+1,j-1} , & \mbox{ if } i,j \mbox{ are even} .
    \end{array}\right. 
\end{align}

\begin{lemma}
With the above $G$-action, the complex $K_{\bu}$ is $G$-equivariant. 
\end{lemma}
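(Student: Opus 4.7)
My plan is to verify $G$-equivariance by reducing to the single automorphism ${}^g$ and then doing a case analysis on the parities of $i$ and $j$. Since $G=\langle g\rangle$ is cyclic, it suffices to check that ${}^g\circ d=d\circ {}^g$ on every generator $\phi_{i,j}$; iteration then shows ${}^{g^s}\circ d=d\circ {}^{g^s}$, and along the way one verifies that the stated formulas for ${}^{g^s}\phi_{i,j}$ agree with $s$-fold composition of ${}^g$ and that ${}^{g^q}$ is the identity (using $p\mid q$ in the $j$-odd case, and the fact that $\sum_{k=0}^{q-1}{}^{g^k}\alpha$ vanishes after $q$ applications, which follows from the telescoping of the correction terms).

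In each parity case the two sides are computed by expanding ${}^g\phi_{i,j}$, applying the appropriate differential formula, and conversely applying $d$ first and then acting by $g$ on both the coefficients (${}^gx=x$, ${}^gy=x+y$) and the basis elements $\phi_{r,s}$. When $i$ is odd the $G$-action fixes $\phi_{i,j}$, so the verification reduces to a direct identity in $R$: for $i,j$ both odd one only needs ${}^g(y-\tfrac12x)=y+\tfrac12x$ together with the correction ${}^g\phi_{i-1,j}=\phi_{i-1,j}+\phi_{i,j-1}$, while for $i$ odd and $j$ even one uses Lemma~\ref{alpha}(a) to rewrite $x\alpha$ and match $^g(y^{p-1}+\tfrac12xy^{p-2})=(x+y)^{p-1}+\tfrac12x(x+y)^{p-2}$. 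When $i$ is even, the $G$-action introduces a new term into $\phi_{i,j}$ that must be differentiated: for $i$ even and $j$ odd the extra term $\phi_{i+1,j-1}$ (with $i{+}1$ odd, $j{-}1$ even) contributes $x\phi_{i,j-1}-(y^{p-1}+\tfrac12xy^{p-2})\phi_{i+1,j-2}$, which combines with the original differential exactly when one uses Lemma~\ref{alpha}(b): $(x+y)\alpha=-y^{p-1}-\tfrac12xy^{p-2}$.

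The main obstacle is the case $i,j$ both even, where the correction term $\alpha\,\phi_{i+1,j-1}$ lives in the odd-odd stratum and therefore produces, upon differentiation, both a contribution $\alpha x\,\phi_{i,j-1}$ and a contribution $-\alpha(y-\tfrac12x)\,\phi_{i+1,j-2}$. These must telescope with the differential of $\phi_{i,j}$ and the $g$-twist of $y^{p-1}\phi_{i,j-1}$, which involves $(x+y)^{p-1}$ acting on $\phi_{i,j-1}+\alpha\phi_{i+1,j-2}$. The four identities of Lemma~\ref{alpha} were designed precisely to make these terms cancel: identity~(c) turns $\alpha x$ into $(x+y)^{p-1}-y^{p-1}$, matching the $\phi_{i,j-1}$-coefficient, and identity~(d) turns $-\alpha(y-\tfrac12x)$ into $(x+y)^{p-1}$, matching the $\phi_{i+1,j-2}$-coefficient after the product $(x+y)\alpha$ from identity~(b) has been accounted for. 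Once this cancellation is carried out, all four cases align and the complex $K_\bullet$ is $G$-equivariant as claimed.
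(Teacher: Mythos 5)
Your overall strategy coincides with the paper's: reduce to the single generator $g$, split into the four parity cases for $(i,j)$, and match coefficients of $\phi_{i-1,j}$, $\phi_{i,j-1}$, $\phi_{i+1,j-2}$ using the four identities of Lemma~\ref{alpha}; well-definedness of the action (${}^{g^q}\phi_{i,j}=\phi_{i,j}$) is likewise handled via the vanishing of $\sum_{s=0}^{q-1}{}^{g^s}\alpha$. Three of your four cases are set up correctly and invoke exactly the identities the paper uses: (a) for $i$ odd, $j$ even; (b) for $i$ even, $j$ odd; and only ${}^gy=x+y$ together with ${}^g\phi_{i-1,j}=\phi_{i-1,j}+\phi_{i,j-1}$ for $i,j$ both odd.

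Your treatment of the case $i,j$ both even, however, contains a concrete error. On the right-hand side ${}^gd(\phi_{i,j})$ one must twist $\phi_{i,j-1}$, whose indices are $i$ even and $j-1$ \emph{odd}; the twist is therefore $\phi_{i,j-1}+\phi_{i+1,j-2}$, not $\phi_{i,j-1}+\alpha\,\phi_{i+1,j-2}$ as you wrote (the $\alpha$-correction occurs only when both indices are even). With your formula the $\phi_{i+1,j-2}$-coefficient on the right would be $(x+y)^{p-1}\alpha$ and the verification would not close; with the correct twist it is $(x+y)^{p-1}$, which matches the left-hand coefficient $-\alpha(y-\tfrac{1}{2}x)$ precisely by identity (d). Identity (b) plays no role in this case, contrary to your clause ``after the product $(x+y)\alpha$ from identity (b) has been accounted for''; only (c) and (d) are needed here. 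Separately, your justification that $\sum_{s=0}^{q-1}{}^{g^s}\alpha=0$ is too thin: the mechanism is to apply ${}^{g^s}$ to identity (c) to get ${}^{g^s}\alpha\cdot x=(y+(s+1)x)^{p-1}-(y+sx)^{p-1}$, sum over $s$ so that the right side telescopes to $(y+qx)^{p-1}-y^{p-1}=0$ (using $p\mid q$), and then cancel $x$ in the domain $\k\langle x,y\rangle/(yx-xy-\tfrac{1}{2}x^2)$. Both defects are repairable without changing your plan, but as written the pivotal even--even case does not go through.
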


\begin{proof}
We first show that this $G$-action is well-defined, that is, ${}^{g^q} \phi_{i,j} = \phi_{i,j}$. 

It is clear that when $i$ is odd, or when $i$ is even and $j$ is odd that ${}^{g^q} \phi_{i,j} = \phi_{i,j}$, as $q$ is divisible by the characteristic $p$ of the field $\k$. When $i,j$ are both even, we need to show ${}^{g^q} \phi_{i,j} = \phi_{i,j} + ({}^{(g^{q-1}+ \cdots + g + 1)} \alpha) \, \phi_{i+1,j-1} = \phi_{i,j}$, that is, we need to show ${}^{(g^{q-1}+ \cdots + g + 1)} \alpha = 0$. 

From Lemma~\ref{alpha} (c), we have $\alpha x = (x+y)^{p-1} - y^{p-1}$. Now apply $g^s$-action on both sides, we have:
\begin{align*}
{}^{g^s} \alpha \ {}^{g^s} x &= ({}^{g^s}x + {}^{g^s}y)^{p-1} - ({}^{g^s}y)^{p-1} \\
{}^{g^s} \alpha \ x &= (x+y+sx)^{p-1} - (y+sx)^{p-1}. 
\end{align*}
Thus, summing over all $0 \leq s \leq q-1$: 
\begin{align*}
\left(\sum_{s=0}^{q-1} {}^{g^s}\alpha \right) x &= \sum_{s=0}^{q-1} \left([y+(s+1)x]^{p-1} - (y+sx)^{p-1}\right) \\
&= (y+qx)^{p-1}-y^{p-1} = 0.
\end{align*}
So $\sum_{s=0}^{q-1} {}^{g^s}\alpha=0$ in the domain $\k \langle x,y \rangle / (yx-xy-\frac{1}{2}x^2)$ and hence is also $0$ in $R$. Therefore, in all cases, we have ${}^{g^q} \phi_{i,j} = \phi_{i,j}$ and the above $G$-action is well-defined. \\

To check that complex $K_{\bu}$ is $G$-equivariant, we need to check such $G$-action is compatible with the differential maps in each degree, that is, $d(\,^g \phi_{i,j}) = \,^gd(\phi_{i,j})$, for all $i,j \geq 0$.\medskip 

\noindent
\textbf{When $i,j$ are even:} 
\begin{align*}
d(\,^g \phi_{i,j}) &= d(\phi_{i,j} + \alpha \phi_{i+1,j-1}) \\
&= x^{p-1}\phi_{i-1,j} + y^{p-1}\phi_{i,j-1} + \alpha \left(x \phi_{i,j-1}-(y-\frac{1}{2}x)\phi_{i+1,j-2} \right), \\
^gd(\phi_{i,j}) &= \,^g x^{p-1} \ ^g\phi_{i-1,j} + \,^gy^{p-1} \ ^g\phi_{i,j-1} \\
&= x^{p-1} \phi_{i-1,j} + (x+y)^{p-1}(\phi_{i,j-1}+\phi_{i+1,j-2}).
\end{align*}
By comparing the coefficients, we see that coefficients for the terms $\phi_{i,j-1}$ and $\phi_{i+1,j-2}$ are exactly identities (c) and (d) in Lemma~\ref{alpha}, respectively.\medskip

\noindent
\textbf{When $i$ is even and $j$ is odd:} 
\begin{align*}
d(\,^g \phi_{i,j}) &= d(\phi_{i,j} + \phi_{i+1,j-1}) \\
&= x^{p-1}\phi_{i-1,j} + y\phi_{i,j-1} + x\phi_{i,j-1} - (y^{p-1}+\frac{1}{2}xy^{p-2})\phi_{i+1,j-2}, \\
^gd(\phi_{i,j}) &= \,^gx^{p-1} \ ^g\phi_{i-1,j} + \,^gy \ ^g\phi_{i,j-1} \\
&= x^{p-1}\phi_{i-1,j} + (x+y)(\phi_{i,j-1}+\alpha\phi_{i+1,j-2}).
\end{align*}
The coefficient for the term $\phi_{i+1,j-2}$ is exactly the identity (b) in Lemma~\ref{alpha}.\medskip

\noindent
\textbf{When $i$ is odd and $j$ is even:} 
\begin{align*}
d(\,^g \phi_{i,j}) &= d(\phi_{i,j}) = x\phi_{i-1,j} - (y^{p-1}+\frac{1}{2}xy^{p-2})\phi_{i,j-1}, \\
^gd(\phi_{i,j}) &= \,^gx \ ^g\phi_{i-1,j} - (\,^gy^{p-1}+\frac{1}{2} \,^gx \ ^gy^{p-2}) \ ^g\phi_{i,j-1} \\
&= x[\phi_{i-1,j} + \alpha\phi_{i,j-1}] - [(x+y)^{p-1} + \frac{1}{2}x(x+y)^{p-2}] \phi_{i,j-1}.
\end{align*}
The coefficient for the term $\phi_{i,j-1}$ is exactly the identity (a) in Lemma~\ref{alpha}.\medskip

\noindent
\textbf{When $i,j$ are odd:} 
\begin{align*}
d(\,^g \phi_{i,j}) &= d(\phi_{i,j}) = x\phi_{i-1,j} - (y-\frac{1}{2}x)\phi_{i,j-1}, \\
^gd(\phi_{i,j}) &= \,^gx \ ^g\phi_{i-1,j} - (\,^gy - \frac{1}{2}\ ^gx)\ ^g\phi_{i,j-1} \\
&= x (\phi_{i-1,j} + \phi_{i,j-1}) - (x+y- \frac{1}{2}x) \phi_{i,j-1}.
\end{align*}
In all cases, we have $d(\,^g \phi_{i,j}) = \,^gd(\phi_{i,j})$. Thus, complex $K_{\bu}$ is $G$-equivariant. 
\end{proof}

We use this $G$-action next to form a twisted tensor product resolution of $\k$ as an $R \# \kG$-module. Let the twisting map $\tau'_n: \kG\ot K_n \rightarrow K_n\ot \kG$ be given by the action of $G$ on $K_n$, so that
$$\tau'_{i+j} (g\ot \phi_{i,j}) = \left\{\begin{array}{ll}
     \phi_{i,j}\ot g & \mbox{ if } i \mbox{ is odd}\\
    (\phi_{i,j} + \phi_{i+1,j-1}) \ot g &\mbox{ if } i \mbox{ is even and } j \mbox{ is odd}\\
    (\phi_{i,j} + \alpha \, \phi_{i+1,j-1})\ot g & \mbox{ if } i,j \mbox{ are even}.
   \end{array} \right.$$
Then $K_{\bu}$ is compatible with $\tau'$ (giving the twisting that governs
the smash product construction)  via the maps $\tau_n'$.

Let $P_{\bu}(\kG)$ be the following free resolution of the trivial $\kG$-module $\k$:
\[
\begin{xy}*!C\xybox{\xymatrixcolsep{1.5pc}
\xymatrix{
   P_{\bu}(\kG): \quad \cdots \ar[rr]^{\qquad \ \left( \sum_{s=0}^{q-1} g^s \right) \cdot} && \kG\ar[rr]^{(g-1)\cdot} &&
    \kG\ar[rr]^{ \left( \sum_{s=0}^{q-1} g^s  \right) \cdot} && \kG\ar[rr]^{(g-1)\cdot} && \kG\ar[r]^{\varepsilon} & \k\ar[r] & 0 , 
}}
\end{xy}
\]
where $\varepsilon$ takes $g$ to 1.

Let $Y_{\bu} := \Tot (K_{\bu}\ot P_{\bu}(\kG) )$.
By \cite[Lemma 5.9]{SW}, $Y_{\bu}\rightarrow \k$ is exact.
The modules are free $(R\# \kG)$-modules by a similar argument to what we used earlier:
In each degree we have a direct sum of modules of the form $R\ot \kG$.
Each such is freely generated by some $\phi_{i,j}\ot \phi_k$, 
where $\phi_k$ denotes the free generator for $P_k(\kG) = \kG$. 
So by Theorem~\ref{thm:twisted resoln}, 
$Y_{\bu}$ is a free resolution of the $(R\# \kG)$-module $\k$.

For each $i,j,k \geq 0$, let $\phi_{i,j,k}$ denote the free generator 
$\phi_{i,j} \ot \phi_k$ of $K_{i+j}\ot P_k(\kG)$ 
as an $(R\# \kG)$-module. 
We set $\phi_{i,j,k}=0$ if one of $i,j,k$ is negative. 
Then, for all $n \geq 0$, as an $(R\# \kG)$-module,
\[
   Y_n = \bigoplus_{i+j+k=n} (R\# \kG) \, \phi_{i,j,k} .
\]
We express the differentials on elements via this notation. 
\[
\begin{aligned}
 & d(\phi_{i,j,k}) = d(\phi_{i,j})\ot \phi_k \\
 & + (-1)^{i+j} \left\{ \begin{array}{ll}
   (g-1)\,\phi_{i,j,k-1} , & \mbox{ if } i,k \mbox{ are odd} \vspace{0.7em} \\
   (g-1)\,\phi_{i,j,k-1} - g\, \phi_{i+1,j-1,k-1} , &\mbox{ if } i \mbox{ is even and } j,k \mbox{ are odd} \vspace{0.7em} \\
   (g-1)\,\phi_{i,j,k-1} - \alpha g\,\phi_{i+1,j-1,k-1}, &\mbox{ if } i,j \mbox{ are even and } k \mbox{ is odd} \vspace{0.7em} \\
   \left( \sum_{s=0}^{q-1} g^s \right) \phi_{i,j,k-1} , & \mbox{ if } i \mbox{ is odd and } k \mbox{ is even} \vspace{0.8em} \\
   \left( \sum_{s=0}^{q-1} g^s \right) \phi_{i,j,k-1} -\left( \sum_{s=0}^{q-1} sg^s \right)\,\phi_{i+1,j-1,k-1} , &\mbox{ if } i,k \mbox{ are even and } j \mbox{ is odd} \vspace{0.7em} \\
   \left( \sum_{s=0}^{q-1} g^s \right) \phi_{i,j,k-1} \\ - \sum_{s=1}^{q-1} ({}^{(g^{s-1} + \cdots + g +1)} \alpha) \, g^s \phi_{i+1,j-1,k-1}, & \mbox{ if } i,j,k \mbox{ are even}.
   \end{array}\right.
\end{aligned}
\]
We will give partial verification to the above differentials in view of \eqref{G-Phi}.\medskip

\noindent
{\bf The case for $i,k$ are even and $j$ is odd.}
\begin{align*}
d(\phi_{i,j,k})&\, =d(\phi_{i,j})\otimes \phi_k+(-1)^{i+j}\phi_{i,j}\otimes d(\phi_k)=d(\phi_{i,j})\otimes \phi_k+(-1)^{i+j}\phi_{i,j}\left( \sum_{s=0}^{q-1} g^s \right)\otimes \phi_{k-1}\\
&\,=d(\phi_{i,j})\otimes \phi_k+(-1)^{i+j}\sum_{s=0}^{q-1}g^s\left(\!^{g^{-s}}\phi_{i,j}\right)\otimes \phi_{k-1}\\
&\,=d(\phi_{i,j})\otimes \phi_k+(-1)^{i+j}\sum_{s=0}^{q-1}g^s\left(\phi_{i,j}-s\phi_{i+1,j-1}\right)\otimes \phi_{k-1}\\
&\,=d(\phi_{i,j})\otimes \phi_k+(-1)^{i+j}\left[\left( \sum_{s=0}^{q-1} g^s \right)\phi_{i,j,k-1}-\left(\sum_{s=0}^{q-1}sg^s\right)\phi_{i+1,j-1,k-1}\right]\\
\end{align*}

\noindent
{\bf The case for $i,j,k$ are even.}
\begin{align*}
d(\phi_{i,j,k})&\, =d(\phi_{i,j})\otimes \phi_k+(-1)^{i+j}\phi_{i,j}\otimes d(\phi_k)=d(\phi_{i,j})\otimes \phi_k+(-1)^{i+j}\phi_{i,j}\left(  \sum_{s=0}^{q-1} g^s \right)\otimes \phi_{k-1}\\
&\,=d(\phi_{i,j})\otimes \phi_k+(-1)^{i+j}\sum_{s=0}^{q-1}g^s\left(\!^{g^{-s}}\phi_{i,j}\right)\otimes \phi_{k-1}\\
&\,=d(\phi_{i,j})\otimes \phi_k+(-1)^{i+j}\left[\left(\phi_{i,j}+\sum_{s=1}^{q-1}g^s(\phi_{i,j}-\!^{(g^{-1}+\cdots+g^{-s})}\alpha\, \phi_{i+1,j-1})\right)\otimes \phi_{k-1}\right]\\
&\,=d(\phi_{i,j})\otimes \phi_k+(-1)^{i+j}\left[\left( \sum_{s=0}^{q-1} g^s \right)\phi_{i,j,k-1}-\left(\sum_{s=1}^{q-1}g^s\,(\!^{(g^{-1}+\cdots+g^{-s})}\alpha)\right)\phi_{i+1,j-1,k-1}\right]\\
&\,=d(\phi_{i,j})\otimes \phi_k+(-1)^{i+j}\left[\left( \sum_{s=0}^{q-1} g^s \right)\phi_{i,j,k-1}-\left(\sum_{s=1}^{q-1}\!^{(g^{s-1}+\cdots+g+1)}\alpha\, g^s\right)\phi_{i+1,j-1,k-1}\right].
\end{align*}

Note that in the above formulas for the differentials, we have $\varepsilon\left(\sum_{s=0}^{q-1}g^s\right)=0=\varepsilon\left(\sum_{s=0}^{q-1}sg^s\right)$ in characteristic $p$, since $p$ divides $q$. Among these differentials of free $R\#\kG$-module basis elements, the only terms in the
outcomes $d(\phi_{i,j,k})$ that do not have coefficients in the
augmentation ideal $\Ker(\varepsilon)$ are the terms $ - g\phi_{i+1,j-1,k-1}$, occurring 
when $i$ is even and $j,k$ are odd. 

Consequently, letting $n=i+j+k$ and $\phi^*_{i,j,k}$ be the dual basis vector
to $\phi_{i,j,k}$ in 

\noindent
$\Hom_\k \left( \bigoplus_{i'+j'+k' = n} \k \phi_{i',j',k'} , \k \right)
\cong \Hom_{R\# \kG} (Y_n,\k)$, we have
\[
    d^*(\phi^*_{i,j,k}) = \left\{ \begin{array}{ll}
    - \phi^*_{i-1,j+1,k+1} , & \mbox{ if } i \mbox{ is odd and } j,k \mbox{ are even}\\
    0 , & \mbox{ otherwise}.
    \end{array}\right.
\]
The cocycles are thus all the $\phi^*_{i,j,k}$ except those for which $i$ is odd
and $j,k$ are even.
The coboundaries are the $\phi^*_{i,j,k}$ for which $i$ is even and $j,k$ are odd.
Therefore, for all $n \geq 0$, as a vector space, 
\[
    \coh^n(R\# \kG ,\k) \cong \left\{
    \begin{array}{ll}
     \Span_\k\{\phi^*_{i,j,k}\mid i+j+k=n\} \\ -  \Span_\k\{\phi^*_{i,j,k}\mid i\mbox{ is even and }
     j,k \mbox{ are odd}\}, & \mbox{ if } n \mbox{ is even} \vspace{0.8em} \\
        \Span_\k\{\phi^*_{i,j,k}\mid i+j+k=n\} \\ -  \Span_\k\{\phi^*_{i,j,k}\mid i \mbox{ is odd and }
    j,k \mbox{ are even}\}, & \mbox{ if } n \mbox{ is odd}.
     \end{array}\right.
\]


\section{Anick resolutions}
\label{sec:Anick}

In this section, we recall the Anick resolution
and make some additional observations about it
in our setting. 
We will use the Anick resolution in Section~\ref{subsec:cohomology lifting}.


\subsection{The resolution construction}
\label{subsec:Anick} 

We generally construct the Anick resolution~\cite{Anick} 
as envisioned by Cojocaru and Ufnarovski~\cite{CU},
adapted here to left modules under some conditions.
An algorithmic description using Gr\"obner bases
is given by Green and Solberg~\cite{Green-Solberg}.
The construction of the resolution also serves as a proof of exactness,
since the differentials are defined recursively in each degree, making use of 
a contracting homotopy in the previous degree that is constructed 
recursively as well. See Theorem~\ref{thm:Anick} below, 
due to Anick.
We include a proof in our setting because we will 
use the construction in Sections~\ref{subsec:tp} and~\ref{subsec:cohomology lifting}.

Let $A= T(V)/(I)$ where $V$ is a 
finite dimensional vector space, $T(V) = T_{\k}(V)$ is the tensor
algebra on $V$ over $\k$, and $I$ is a set of relations generating
an ideal $(I)$. 
We denote the image of an element $v$ of $V$ in $A$ also by $v$
when it will cause no confusion. 
We assume that $A$ is augmented by an algebra homomorphism 
$\varepsilon: A\rightarrow \k$ with $\varepsilon(v) =0$ for all $v\in V$.
Fix a totally ordered basis $v_1, \ldots, v_n$ of $V$ 
(say $v_1 < \cdots < v_n$) and
consider the degree lexicographic ordering on words in $v_1,\ldots, v_n$.
That is, we give each $v_i$ the degree~1, and 
monomials (words) are ordered first according to total degree, then monomials
having the same degree (i.e.\ word length) are ordered as in a dictionary.

A {\em normal word} (called an element of an {\em order ideal of monomials},
or {\em o.i.m.}\ in \cite{Anick}) is a monomial (considered as an element of $T(V)$)
that cannot be written as a linear combination of smaller words in $A$.
As a vector space, $A$ has a basis in one-to-one correspondence with
the set of normal words.

A {\em tip} (called an {\em obstruction} in \cite{Anick}) is a word 
(considered as an element in $T(V)$)
that is not normal but for which any proper subword is normal.
It follows that the tips correspond to the relations:
Let $u$ be a tip and write the image of $u$ in $A$ as a linear
combination $u = \sum a_i t_i$, where each $t_i$ is a normal
word and $a_i$ is a scalar. 
Then, viewed as an element of $T(V)$, $u - \sum a_i t_i$ is in
the ideal of relations, $(I)$. 
It also follows that the tips are in one-to-one correspondence
with a Gr\"obner basis of $(I)$~\cite{Green-Solberg}. 

We construct the Anick resolution from the chosen sets of generators
and relations in $A$ as follows. 
We reindex in comparison to \cite{Anick} so that indices
for spaces correspond
to homological degrees, and indices for functions correspond 
to homological degrees of their domains.

The Anick resolution is a free resolution of $\k$ considered to be an $A$-module
under the augmentation $\varepsilon$.
We will first describe a free basis $C_n$ in each homological degree $n$
of the resolution. 
We will write the resolution as: 
$$\begin{xy}*!C\xybox{
\xymatrix{
  \cdots \ar[r]^{d_3 \quad \ } & A\ot \k C_2
         \ar[r]^{ d_2 \ } & A\ot \k C_1
         \ar[r]^{ \quad \  d_1 } & A 
         \ar[r]^{\varepsilon  } & \k 
         \ar[r] & 0,
}}
\end{xy}$$
where $\k C_n$ denotes the vector space with basis $C_n$.
We adapt the degree lexicographic ordering on monomials in $T(V)$ to each
$A$-module $A\ot\k C_n$
by giving an element $s\ot t$,
where $s$ is a normal word and $t\in C_n$, the 
degree of $st$ viewed as an element of $T(V)$.

Let 
\[
  C_1 = \{ v_1,\ldots, v_n\} ,
\]
that is, $C_1$ is the chosen set of generators. 
Let $C_2$ be the set of tips (or obstructions).
The remaining sets $C_n$ will be defined as sets of paths of length $n$ in 
a directed graph (or quiver) associated to the generators and tips 
as follows~\cite{CU}.
The graph will have at most one directed arrow joining two vertices, and
paths will be denoted by the product of their vertices in $T(V)$,
written from right to left, for example, if
$f,g$ are vertices and there is an arrow from $f$ to $g$,
we denote the arrow by $gf$, and if there is a further
arrow from $g$ to $h$, then $hgf$ denotes the path
\[
   f\rightarrow g \rightarrow h
\]
starting at $f$, passing through $g$, and ending at $h$. 

Let ${\mathcal{B}} = \{ v_1,\ldots, v_n\}$ be a basis of $V$, 
equipped with the ordering $v_1<\cdots < v_n$ as above,
so that we may identify $\mathcal{B}$ with $C_1$.
Let $\mathcal{T}$ be the set of tips.  
Let $\mathcal{R}$ be the set of all proper prefixes (that is, left
factors) of the tips considered as elements of $T(V)$.
(Note that ${\mathcal{B}}\subset {\mathcal{R}}$.)
Let ${\mathbf{Q}} = {\mathbf{Q}}({\mathcal{B}},{\mathcal{T}})$
be the following quiver.
The vertex set is $ \{1\}\cup  {\mathcal{R}}$. 
The arrows are all $1\rightarrow v_i$ for $v_i\in {\mathcal {B}}$
and all $f\rightarrow g$ for which
the word $gf$ (viewed as an element of $T(V)$)
uniquely contains a tip, and that tip is a prefix
(possibly coinciding with $gf$).

The set $C_n$ consists of all paths of length $n$ starting from $1$
in the quiver.
In this context, the path 
$1\rightarrow f\rightarrow g$ is identified with the product $gf$.
(Note that $f\rightarrow g$ does not occur on its own
as an element of any $C_i$ if $f\neq 1$,
so for our purposes there will be no confusion in denoting paths this way.) 
For use in constructing the chains $C_n$, we observe that we only use 
the vertices that are in the connected component of $\mathbf{Q}$ 
containing $1$.
Let $\overline{\mathbf{Q}} =\overline{\mathbf{Q}}({\mathcal{B}},{\mathcal{T}})$ be the connected component of $1$
in $\mathbf{Q}$, called the {\em reduced quiver} of $\mathcal B$
and $\mathcal T$.

The differentials $d$ are defined recursively,
with a simultaneous recursive definition of a 
$\k$-linear contracting homotopy $s$:
\begin{equation}\label{eqn:Anick}
\begin{xy}*!C\xybox{
\xymatrix{
  \cdots \ar@<2pt>[r]^{d_3 \ \ \ } & A\ot \k C_2\ar@<2pt>[l]^{s_2 \ \ \ }
         \ar@<2pt>[r]^{ \ d_2  } & A\ot \k C_1\ar@<2pt>[l]^{ \ s_1  }
         \ar@<2pt>[r]^{\ \ \ \ d_1 } & A \ar@<2pt>[l]^{\ \ \ \ s_0  }
         \ar@<2pt>[r]^{\varepsilon  } & \k \ar@<2pt>[l]^{\eta  }
         \ar[r] & 0 ,
}}
\end{xy}
\end{equation}
where $\eta$ is the unit map (taking the multiplicative identity
of $\k$ to the multiplicative identity of $A$). 
We give these definitions next in our setting, simultaneously
proving the following theorem. 
Examples are given in \cite{Anick,CU} and below in 
Sections~\ref{subsec:tp} and~\ref{subsec:cohomology lifting}.

\begin{thm}\label{thm:Anick}\cite[Theorem~1.4]{Anick} 
There are maps $d_n, s_n$ for which $(A\ot \k C_{\bu} , d_{\bu})$ is a free
resolution of $\k$ as an $A$-module and $s_{\bu}$ is a contracting homotopy.
\end{thm}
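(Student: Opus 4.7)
The plan is to construct $d_n$ and $s_n$ simultaneously by a double induction: the outer induction is on the homological degree $n$, and an inner induction (used to extend $s_n$ from leading-term generators to all of $A\ot \k C_n$) is on the degree-lexicographic order. The guiding principle is combinatorial: $C_{n+1}$ indexes paths of length $n+1$ in the reduced quiver $\overline{\mathbf Q}$, and extending such a path by one arrow should produce the leading term of $d_{n+1}$ on the corresponding free generator, while $s_n$ is engineered to recover that generator from the leading term.

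For the base, set $d_1(1\ot v_i)=v_i$ and define $s_0$ on normal words by peeling off the first letter (with $s_0(1)=0$); the identity $\eta\varepsilon+d_1 s_0=\mathbf{1}_A$ is then immediate. Now assume $d_i,s_{i-1}$ have been constructed for $i\le n$ with $d_{i-1}d_i=0$ and $d_i s_{i-1}+s_{i-2}d_{i-1}=\mathbf{1}$. For each $c\in C_{n+1}$, let $c'\in C_n$ be the truncation obtained by dropping the terminal arrow and terminal vertex $v$, and set
\[
d_{n+1}(1\ot c)\;=\;v\ot c'\;-\;s_{n-1}\bigl(v\cdot d_n(1\ot c')\bigr),
\]
extended $A$-linearly. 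Then $d_n d_{n+1}(1\ot c)=0$ follows by applying the previous-level homotopy identity to the cycle $v\cdot d_n(1\ot c')$. The homotopy $s_n$ is then defined on a basis element $w\ot c$ (with $w$ a normal word) by locating the unique $c''\in C_{n+1}$ whose differential has $w\ot c$ as its degree-lex leading term, and setting $s_n(w\ot c)=1\ot c''$ plus a recursive application of $s_n$ to the strictly smaller remainder.

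The main obstacle is the well-definedness of this recursion. One must verify that (i) the argument $v\cdot d_n(1\ot c')$ passed to $s_{n-1}$ is indeed a cycle, so that the previous-level homotopy formula makes $d_n$ of it vanish modulo $s_{n-2}d_{n-1}$ terms that cancel in the computation of $d_n d_{n+1}$; (ii) for every non-trivial $w\ot c$ a unique $c''\in C_{n+1}$ exists whose leading differential term is $w\ot c$; and (iii) the remainder $d_{n+1}(1\ot c'')-w\ot c$ in the definition of $s_n$ involves only strictly smaller monomials, so the inner induction terminates. Items (ii) and (iii) both rest on the characterization of non-normal words: any such monomial uniquely contains a minimal tip, and this tip determines the next arrow of the extending path in $\overline{\mathbf Q}$, forcing all non-leading terms appearing in the twisting corrections to be of strictly smaller degree-lex order. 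Once these checks are in hand, freeness of each $A\ot \k C_n$ holds by construction, the identity $d_{n+1}s_n+s_{n-1}d_n=\mathbf{1}$ holds by design, and exactness of $(A\ot \k C_{\bu},d_{\bu})\to\k$ follows immediately from the existence of the contracting homotopy $s_{\bu}$.
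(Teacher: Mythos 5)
Your overall architecture --- a simultaneous recursive construction of $d_n$ and $s_n$, with an inner induction on the degree-lexicographic order of leading terms --- is the same as the paper's. However, your definition of $s_n$ contains a genuine gap. You propose to define $s_n$ on an arbitrary basis element $w\ot c$ (with $w$ a normal word and $c\in C_n$) by locating a unique $c''\in C_{n+1}$ whose differential has $w\ot c$ as its degree-lex leading term, and your item (ii) asserts that such a $c''$ exists for every nontrivial $w\ot c$. That assertion is false: such a $c''$ exists only when the word $wc$, read in $T(V)$, contains a tip straddling the juncture between $w$ and the first vertex of $c$. For example, for $A=\k[w,x,y]/(w^{m_1},x^{m_2},y^{m_3})$ as in Section 4.2, the basis element $x\ot y$ of $A\ot\k C_1$ is not the leading term of $d_2(1\ot u)$ for any tip $u\in C_2$, because $xy$ is a normal word. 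So your recipe does not define $s_n$ on all of $A\ot\k C_n$, and items (ii)--(iii) cannot be verified as stated.

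The device your proposal omits is that the leading-term recursion for $s_n$ is carried out only on $\Ker(d_n)$. For a cycle $\sum_i a_i r_i\ot u_i$ with leading term $r_1\ot u_1$ and $u_1=u'u''$, it is precisely the vanishing of $d_n$ on this element that forces the word $r_1u'$ to contain a tip; that tip determines the arrow in $\overline{\mathbf{Q}}$ and hence the element $tu_1\in C_{n+1}$ entering the recursive formula for $s_n$. One then extends $s_n$ by zero on $\ima(s_{n-1})$, using the direct-sum decomposition $A\ot\k C_n=\Ker(d_n)\oplus\ima(s_{n-1})$, which is itself obtained inductively from the homotopy identity one degree lower. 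This decomposition, together with $s_ns_{n-1}=0$, is also what makes the verification of $d_{n+1}s_n+s_{n-1}d_n=\mathbf{1}$ work: one checks it separately on the two summands. Note also that the decomposition is not a splitting into spans of basis vectors, so a basis-element-by-basis-element definition of $s_n$ cannot reproduce it. Without restricting the leading-term recursion to cycles and handling the complement separately, both the well-definedness of $s_n$ and the homotopy identity fail.
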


\begin{proof}
We first define the maps $d_n, s_{n-1}$ for $n=1,2$ to illustrate
the general method.
We then use induction on $n$. \vskip 7pt

{\bf Degree 1:}
We take $n=1$ and let
\[
    d_1 (1\ot v_i) = v_i
\]
for all $v_i$ in $\mathcal{B}$ and extend $d_1$ so that it is a left $A$-module
homomorphism.
To define the $\k$-linear map $s_0: A\rightarrow A\ot \k C_1$, first 
write elements of $A$ as $\k$-linear combinations of normal words
(which form the chosen vector space basis of $A$).
Define $s_0$ on $A$ via its values on all normal words, which are as follows.
Set $s_0(1) =0$ and $s_0(uv_i)= u\ot v_i$ for all normal
words of the form $uv_i$ for some word $u$ and $v_i$ in $\mathcal{B}$.
Extend $s_0$ so that it is a  $\k$-linear map on $A$, 
and note that it will not be
an $A$-module homomorphism in general. 
We now see that by construction, 
\[
   a = (d_1 s_0 + \eta\varepsilon) (a)
\]
for all $a\in A$.
It also follows that $A\ot \k C_1 = \Ker (d_1)\oplus \ima (s_0)$.
To see this, let 
$b\in A\ot \k C_1$ and write $b= (b-s_0d_1 (b)) + s_0 d_1(b)$.
One checks that 
$b-s_0d_1(b)\in \Ker (d_1)$; by definition,  $s_0d_1(b)\in \ima (s_0)$.
The intersection of these two spaces is 0 by the above equation 
and definitions:
If $b\in \Ker (d_1)\cap \ima (s_0)$, write $b = s_0(c)$.
Then 
\[
   c = (d_1 s_0 + \eta \varepsilon) (c) = \eta\varepsilon(c) , 
\]
which implies $c\in\k$ so that $b = s_0(c) =0$. \vskip 7pt

{\bf Degree 2:}
We take $n=2$ and 
define $d_2( 1\ot u)$ for $u$ in $C_2$ as follows.
By definition of $C_2$, we may 
write $u = rv_i$ uniquely in $T(V)$ for a word $r$ 
in ${\mathcal{R}}$ and $v_i\in C_1$.
Consider $r\ot v_i$ as an element of $A\ot \k C_1$, and further take 
its image under the $A$-module homomorphism $d_1$: 
\[
   d_1 (r\ot v_i) = rd_1 (1\ot v_i) = rv_i .
\]
Define
\[
   d_2 (1\ot u) = r\ot v_i - s_0(d_1(r\ot v_i)) 
     = r\ot v_i - s_0(rv_i) ,
\]
and extend $d_2$ so that it is an $A$-module homomorphism on $A\ot \k C_2$. 
By its definition, $rv_i$  when considered as an element of $T(V)$
is a tip (not a normal word),
and considered here as an element of $A$, it
must be rewritten as a $\k$-linear combination of
normal words before applying $s_0$ (since $s_0$ is a $\k$-linear map
but not an $A$-module homomorphism).
Now the definitions of $d_1$ and $s_0$ imply
$d_1 s_0 |_{\Ker(\varepsilon)} = \mathbf{1}_{\Ker (\varepsilon)}$, the
identity map on $\Ker (\varepsilon)$.
It also follows that $d_1 d_2 =0$.

We wish to define $s_1$ so that $d_2 s_1 |_{\Ker (d_1)} = \mathbf{1}_{\Ker(d_1)}$ and
more generally so that 
\[
    d_2 s_1 + s_0d_1 = \mathbf{1}_{A\ot \k C_1} .
\]
First define $s_1$ on elements in $\Ker(d_1)$
by induction on their degrees,
starting with those that are least in the ordering, which are 
elements of $A\ot \k C_1$ corresponding to relations:
Ordering the elements of $C_2$ as $u_1,\ldots, u_\ell$, with $u_1$ least,
we define $s_1(d_2(1\ot u_1))= 1\ot u_1$.
Recall that we have chosen a total order on a vector space basis
of $A\ot \k C_1$, given by elements $r\ot v_i$ where $r$ is a normal
word and $v_i\in {\mathcal{B}}$, to coincide with the order on 
the corresponding words $rv_i$ in $T(V)$. 
Assume $s_1$ has been defined on elements of $\Ker (d_1)$
with highest term of degree (i.e.\ position in the total order) less
than $n$. 
Let $\sum_{j=1}^m a_{i_j} r_{i_j}\ot v_{i_j}\in \Ker (d_1)$ 
for some nonzero $a_{i_j}\in \k$, $r_{i_j}\in A$,
with $v_{i_1}, \ldots, v_{i_m}$ distinct elements of $\mathcal{B}$, 
and terms ordered so that $r_{i_1}\ot v_{i_1}$ is greatest and has degree~$n$.
Since $d_1(\sum a_{i_j}r_{i_j}\ot v_{i_j})=0$ in $A$ by assumption, 
and $r_{i_1}\ot v_{i_1}$ is greatest,
the  monomial $r_{i_1}v_{i_1}$ in $T(V)$ must contain a tip.
Since $r_{i_1}$ is a nonzero normal word, 
the tip must be a suffix (that is, right factor)
of $r_{i_1}v_{i_1}$, say $r_{i_1}v_{i_1} = v'u'$ in $T(V)$ with $u'$ a tip.
Since $u'$ is a tip, there is an element in the ideal $(I)$ of the form
$u' + \sum_{k=1}^\ell b_k t _k$ for some normal words $t _k$ and scalars $b_k$.
Write each $t_k = t_k ' v_{i_k}$ for  words $t_k'$. 
Set $\alpha = - \sum_{k=1}^\ell a_{i_1} b_k v' t_k '\ot v_{i_k}
+ \sum_{j=2}^m a_{i_j} r_{i_j}\ot v_{i_j}$ and 
\[
  s_1 \left( \sum_{j=1}^m a_{i_j} r_{i_j}\ot v_{i_j}\right) = a_{i_1} v'\ot u'  + s_1(\alpha) .
\]
Note that $\alpha$ consists of terms of lower degree than $r_{i_1}\ot v_{i_1}$,
and $\alpha \in\Ker (d_1)$ by construction, so 
we may now apply the induction hypothesis to define $s_1(\alpha)$. 
Recall that $A\ot \k C_1 = \Ker (d_1) \oplus \ima (s_0)$.  
We define $s_1$ on $\ima(s_0)$ to be 0.
We claim that by these definitions, 
\[
   d_2 s_1 + s_0 d_1 = \mathbf{1}_{A\ot \k C_1} .
\]
To see this, we check separately for elements of $\Ker(d_1)$ and
of $\ima (s_0)$. 
If $x=\sum_{j=1}^m a_{i_j}r_{i_j}\ot v_{i_j} \in \Ker (d_1)$ as above,
then by the inductive definition of $s_1$, we have
\[
   (d_2 s_1 + s_0 d_1)(x) = d_2 s_1(x) = x .
\]
If $x\in \ima (s_0)$ then 
\[
  (d_2 s_1 + s_0 d_1 )(x) = s_0 d_1(x) = x
\]
since $d_1 s_0 + \eta \varepsilon = {\mathbf{1}}_A$. 
It follows that $A\ot \k C_2  = \Ker (d_2)\oplus \ima (s_1)$:
If $b\in A\ot \k C_2$ then $b = (b-s_1d_2(b)) + s_1 d_2(b)$,
with $b-s_1d_2(b)$ in $\Ker (d_2)$ and $s_1 d_2 (b)$ in $\ima (s_1)$.
If $b\in \Ker (d_2)\cap \ima (s_1)$, write $b=s_1(c)$,
and we have $c= (d_2 s_1 + s_0 d_1)(c) = s_0 d_1 (c)$.
Then $b=s_1(c) = s_1 s_0d_1(c) = 0$ since $s_1 s_0 =0$ by definition of $s_1$. \vskip 7pt

{\bf Degree at least $3$:}
We take $n\geq 3$ and 
assume that $A$-module homomorphisms 
$d_1,\ldots, d_{n-1}$ and $\k$-linear maps $s_0,\ldots, s_{n-2}$
have been defined so that $d_{i-1}d_i =0$, $s_{i-1}s_{i-2} =0$, and 
$d_{i} s_{i-1} + s_{i-2} d_{i-1} = {\mathbf{1}}_{A\ot \k C_{i-1}}$ for $1\leq i\leq n-1$.
It follows by an argument similar to the above 
that
\[
  A\ot \k C_i = \Ker (d_i) \oplus \ima (s_{i-1})
\]
for all $1\leq i\leq n-1$. 
In particular, $A\ot \k C_{n-1} = \Ker (d_{n-1}) \oplus \ima (s_{n-2})$. 
We will define $d_n$ and $s_{n-1}$.
The map $d_n$ is defined first as follows. 
Let $u\in C_n$.
We may write uniquely $u = ru'$ for $u'\in C_{n-1}$ and $r$ in $\mathcal{R}$
by construction of the quiver $\mathbf{Q}$.
Let
\begin{equation}\label{eqn:dn-def}
   d_n(1\ot u) = r\ot u' - s_{n-2}d_{n-1}(r\ot u').
\end{equation}
Now $d_{n-1}(r\ot u') = r d_{n-1}(1\ot u')$ since $d_{n-1}$
is an $A$-module homomorphism, and in order to apply 
$s_{n-2}$ to this element of $A\ot \k C_{n-2}$, any elements of $A$
will need to be rewritten
as linear combinations of normal words before applying $s_{n-2}$ (since
$s_{n-2}$ is $\k$-linear but not an $A$-module homomorphism in general).
It follows directly from the definition of $d_n$ and the induction
hypothesis that $ d_{n-1} d_n =0$.

We wish to define $s_{n-1}$ so that $d_n s_{n-1}|_{\Ker(d_{n-1})} =
\mathbf{1}_{\Ker(d_{n-1})}$ and more generally so that 
\[
   d_n s_{n-1} + s_{n-2} d_{n-1} = \mathbf{1}_{A\ot \k C_{n-1}} . 
\]
The map $s_{n-1}$ is defined inductively as follows.
Let $\sum_{i=1}^m a_i r_i\ot u_i\in \Ker(d_{n-1})$ for some $a_i\in \k$,
normal words $r_i\in A$, and $u_i\in C_{n-1}$.
Recall that we have chosen a total order on a vector space basis
of $A\ot \k C_{n-1}$, given by elements $r\ot u$ where $r$ is a normal
word and $u\in  C_{n-1}$, to coincide with the order on the
corresponding words $ru$ in $T(V)$. 
We may assume $r_1\ot u_1$ is the highest term among all $r_i\ot u_i$.  
Write $u_1 = u'u''$, uniquely, where $u''\in C_{n-2}$.
Then by definition of $d_{n-1}$ (replacing $n$ by $n-1$
in equation~(\ref{eqn:dn-def})), we have 
\[
 0 =  d_{n-1}\left(\sum_{i=1}^m a_i r_i\ot u_i \right)  = a_1 r_1 u'\ot u'' + \beta ,
\]
where  
$\beta=-s_{n-3}d_{n-2}(a_1r_1u' \ot u'') + d_{n-1}(\sum_{i=2}^m a_ir_i\ot u_i)$,  
and when the term $d_{n-1} ( \sum a_i r_i \ot u_i)$ is
expanded, due to cancellation, the resulting expression for $\beta$
consists of terms lower in the order than $r_1\ot u_1$. 
Since $0 = a_1 r_1 u' \ot u'' + \beta$, 
considering $r_1u'$ as a word in $T(V)$,
there is a tip $v''$ that is a factor 
of $r_1u'$ in $T(V)$.
To make a unique choice of such a tip, 
write $r_1 = v_{j_1}\cdots v_{j_\ell}$ as a word in the letters in $\mathcal{B}$.
Now $u'$ is not a tip, but $r_1u'$ contains a tip, and so there is
a largest $k$ ($k\leq \ell$) for which $v_{j_k}\cdots v_{j_\ell} u'$ 
(uniquely) contains
a tip, and by construction this tip will then be a prefix.
Thus we may write, uniquely,
$r_1 u' = v'tu'$ where $t\in {\mathcal{R}}$ and $tu'$ uniquely
contains a tip that is a prefix.
So there is an arrow $u'\rightarrow t$ in the reduced
quiver $\overline{{\mathbf{Q}}}$ by definition.
Therefore $tu_1=tu'u''\in C_n$.
We may thus set
\begin{equation}\label{eqn:sn-1}
   s_{n-1} \left( \sum_{i=1}^m a_ir_i\ot u_i \right) = a_1 v'\ot
    tu_1 + s_{n-1}(\gamma) ,
\end{equation}
where 
\[
   \gamma = \sum_{i=1}^m a_i r_i \ot u_i  - d_n (a_1 v'\ot tu_1)
\]
has highest term that 
is lower in the order than $r_1\ot u_1$.
(To obtain the above expression, 
in the argument $\sum a_i r_i\ot u_i$ of $s_{n-1}$,
we have added and subtracted $d_n (a_1 v'\ot tu_1)$.)
Now continue in the same fashion to obtain
$s_{n-1}(\gamma)$ in terms of elements lower in the total order,
and so on.
Since the chosen basis of $A\ot \k C_{n-1}$ 
is well-ordered, we will eventually reach an
expression involving $s_{n-1}(0)=0$. 
 
As before, we define $s_{n-1}$ on $\ima (s_{n-2})$ to be 0,
so that $s_{n-1}s_{n-2}=0$. 
A calculation as before now shows that
\[
  d_n s_{n-1} + s_{n-2} d_{n-1} = \mathbf{1}_{A\ot \k C_{n-1}} .
\]
Now by its definition and the above arguments, 
$s_{\bu}$ is a contracting homotopy for the complex~(\ref{eqn:Anick}), implying
that the complex is exact, and therefore $(A\ot \k C_{\bu} , d_{\bu})$
is a free resolution of $\k$ as an $A$-module. 
\end{proof}


\subsection{A truncated polynomial ring}
\label{subsec:tp}

In this section, we let $\k$ be any field and $m_1,m_2,m_3$ be 
positive integers at least~2.
We look closely at the Anick resolution for the algebra
$A=\k [w,x,y]/(w^{m_1},x^{m_2},y^{m_3})$ which
will be used in the next section. 
For this purpose, 
choose generating set ${\mathcal{B}} = \{w, x,y\}$ and relations
\begin{equation}\label{eqn:3-relations}
I = \{w^{m_1}, \ x^{m_2}, \ y^{m_3}, wx-xw, \ wy-yw, \ xy-yx \} .
\end{equation}
Then $A$ has basis $\{ w^i x^j y^k \mid 0\leq i\leq m_1-1 , \
                                                 0\leq j\leq m_2-1 , \  
                                       0\leq k\leq m_3-1   \}$.
We choose the ordering $w<x < y $.
So, for example, the degree lex ordering on basis elements in degrees 0, 1, 2 is
\[
    1 < w < x < y < w^2 < wx < wy < x^2 < xy < y^2 .
\]
Note that $xy$ is a normal word, while $yx$ is a tip (or obstruction).
Generally, the normal words correspond to the  PBW basis of $A$, and 
the tips are
\[
  {\mathcal{T}}:=  \{ w^{m_1}, \ x^{m_2} , \ y^{m_3} , \ xw, \ yw, \ yx \} .
\]
The proper prefixes of the tips are 
\[
 {\mathcal{R}} = \{ w^i , \ x^j , \ y^k \mid 0\leq i\leq m_1-1 , \
                                                 0\leq j\leq m_2-1 , \ 
                                       0\leq k\leq m_3-1   \} .
\]

The corresponding reduced quiver $\overline{{\mathbf{Q}}}$, as defined in Section~\ref{subsec:Anick}, is as follows.
(The nonreduced quiver $\mathbf{Q}$ contains additional vertices and
arrows if $m>3$, but we will not need this here.)
If $m_1 =2$, the vertices $w$ and $w^{m_1 -1}$ are identified, and
there is a loop at that vertex. Similarly for $m_2$, $m_3$. 
\[
\begin{xy}*!C\xybox{
\xymatrix{
      && 1\ar[dll]\ar[d]\ar[drr] && \\
   w\ar[rr]\ar@/^/[d]\ar@/^1pc/[rrrr] && x\ar[rr]\ar@/^/[d] && y\ar@/^/[d] \\
  w^{m_1-1}\ar@/^/[u]\ar[urr]\ar[urrrr] && x^{m_2-1} \ar@/^/[u]\ar[urr] && y^{m_3-1} \ar@/^/[u]
}}
\end{xy}
\]
We have 
\begin{eqnarray*}
  C_1 & = & \{ w, \ x , \ y \}, \\
  C_2 &=& \{ w^{m_1}, \ x^{m_2}, \ y^{m_3}, \ xw , \ yw , \  yx\}, \\
  C_3 & = & \{ w^{m_1+1}, \ x^{m_2+1}, \ y^{m_3+1}, xw^{m_1}, \ yw^{m_1}, \ yx^{m_2}, \ x^{m_2}w , \
     y^{m_3}w , \ y^{m_3}x , \ yxw \} , 
\end{eqnarray*}
and similarly we may find $C_n$ for $n>3$.

A free basis of $C_n$ is all $1\ot u$ where $u$ is a path of
length $n$ starting at 1 in the above reduced quiver $\overline{{\mathbf{Q}}}$. 
Fix such a free basis element $1\ot u$.
Suppose $n=i+j+k$ and the first $i$ vertices in the path $u$ are
in the set $\{w,w^{m_1-1}\}$, the second $j$ vertices of $u$ are in the set $\{x,x^{m_2-1}\}$,
and the third $k$ vertices are in the set $\{y,y^{m_3-1}\}$.
Write $u=u_{i,j,k}$ and note that the
triple of indices $i,j,k$ uniquely determines the path.
For convenience, we set $C_0 = \{ 1 \}$
and $u_{000} = 1$, identifying $A$ with $A\ot \k C_0$.
We set $u_{ijk} = 0$ if $i$, $j$, or $k$ is negative.

\begin{lemma}
Let $A=\k [w,x,y]/(w^{m_1},x^{m_2},y^{m_3})$, and let $P_{\bu}$ denote the Anick resolution
of $A$ with respect to the chosen generators $w,x,y$ and relations~(\ref{eqn:3-relations}). 
Then
\[
   d_{n} (1\ot u_{ijk}) = 
   y^{\sigma_3(k)} \ot u_{i, j, k-1} + (-1)^k x^{\sigma_2(j)} \ot u_{i, j-1, k}
    + (-1)^{j+k} w^{\sigma_1(i)} \ot u_{i-1, j , k} 
\]
where $n=i+j+k$, and $\sigma_a(\ell)=\begin{cases} 1 , & \text{ if } \ell \text{ is odd}, \\ 
                                                                             m_a-1 , & \text{ if } \ell \text{ is even}. \end{cases}$ 
\end{lemma}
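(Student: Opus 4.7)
I proceed by induction on $n = i+j+k$, with $n=0,1$ immediate from the definitions. For the inductive step, the unique decomposition $u_{ijk} = r\, u'$ with $u' \in C_{n-1}$ prescribed by~(\ref{eqn:dn-def}) strips off the leftmost (last-visited) vertex of the path $u_{ijk}$, yielding three symmetric subcases: $r = y^{\sigma_3(k)}$ and $u' = u_{i,j,k-1}$ when $k > 0$; $r = x^{\sigma_2(j)}$ when $k = 0 < j$; $r = w^{\sigma_1(i)}$ when $j = k = 0 < i$. I carry out the $k > 0$ subcase in detail; the other two are analogous after permuting $w,x,y$ and produce the correctly reduced formula via the convention $u_{\star,\star,\star} = 0$ when an index is negative.

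Applying~(\ref{eqn:dn-def}), substituting the inductive formula into $d_{n-1}(y^{\sigma_3(k)}\ot u_{i,j,k-1}) = y^{\sigma_3(k)} d_{n-1}(1\ot u_{i,j,k-1})$, and simplifying via $y^{\sigma_3(k)} y^{\sigma_3(k-1)} = y^{m_3} = 0$ in $A$ together with commutativity, I obtain
\[
d_n(1 \ot u_{ijk}) \;=\; y^{\sigma_3(k)} \ot u_{i,j,k-1} \;-\; s_{n-2}(\Theta),
\]
where
\[
\Theta \;:=\; (-1)^{k-1} x^{\sigma_2(j)} y^{\sigma_3(k)} \ot u_{i,j-1,k-1} \;+\; (-1)^{j+k-1} w^{\sigma_1(i)} y^{\sigma_3(k)} \ot u_{i-1,j,k-1}
\]
lies in $\Ker(d_{n-2})$. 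If I establish that $s_{n-2}(\Theta) = (-1)^{k-1} x^{\sigma_2(j)} \ot u_{i,j-1,k} + (-1)^{j+k-1} w^{\sigma_1(i)} \ot u_{i-1,j,k}$, then the claimed formula follows from the sign identities $-(-1)^{k-1} = (-1)^k$ and $-(-1)^{j+k-1} = (-1)^{j+k}$.

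The main task, and the central obstacle, is computing $s_{n-2}(\Theta)$ explicitly by tracing the deterministic leading-term recursion in the proof of Theorem~\ref{thm:Anick}. The two basis words appearing in $\Theta$ have equal total word-length (the swap $\sigma_1(i) \leftrightarrow \sigma_2(j)$ between coefficient and path contributions cancels), so under degree lex with $w < x < y$ the $x$-summand leads. The algorithm decomposes $x^{\sigma_2(j)} y^{\sigma_3(k)} \cdot u_{i,j-1,k-1}$ in $T(V)$ as $v'\cdot t\cdot u'$ with $v' = x^{\sigma_2(j)}$, $t = y^{\sigma_3(k)}$, and $u'$ the leftmost vertex of $u_{i,j-1,k-1}$; the arrow $u' \to t$ in $\overline{\mathbf{Q}}$ (present by inspection in every subcase of $k, j, i$, since each of $y^{\sigma_3(k-1)}$, $x^{\sigma_2(j-1)}$, and $w^{\sigma_1(i)}$ has an arrow to $y^{\sigma_3(k)}$) certifies that $t u_1 = y^{\sigma_3(k)} u_{i,j-1,k-1} = u_{i,j-1,k} \in C_{n-1}$, and the first output is $(-1)^{k-1} x^{\sigma_2(j)} \ot u_{i,j-1,k}$. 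Subtracting $d_{n-1}$ of this from $\Theta$ leaves a remainder whose leading term is $(-1)^{j+k-1} w^{\sigma_1(i)} y^{\sigma_3(k)} \ot u_{i-1,j,k-1}$, accompanied by a cross term supported on $u_{i-1,j-1,k}$. A second iteration of the algorithm produces $(-1)^{j+k-1} w^{\sigma_1(i)} \ot u_{i-1,j,k}$ via the analogous decomposition $v' = w^{\sigma_1(i)}$, $t = y^{\sigma_3(k)}$, and the subsequent remainder vanishes identically because the two cross contributions on $u_{i-1,j-1,k}$ cancel as $(-1)^j + (-1)^{j+1} = 0$. Summing the two outputs yields the required value of $s_{n-2}(\Theta)$, completing the induction.
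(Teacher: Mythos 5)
Your proposal is correct and follows essentially the same route as the paper's proof: induction on $n$ via the recursive definition~(\ref{eqn:dn-def}), killing the term with $y^{\sigma_3(k)}y^{\sigma_3(k-1)}=0$, and then unwinding the contracting homotopy $s_{n-2}$ by leading-term analysis in the degree-lex order (where the paper writes ``continuing by working inductively, with appropriate signs,'' you carry out the second iteration and the cancellation of the cross terms on $u_{i-1,j-1,k}$ explicitly, which is a welcome addition). One small imprecision: your blanket claim that each of $y^{\sigma_3(k-1)}$, $x^{\sigma_2(j-1)}$, $w^{\sigma_1(i)}$ has an arrow to $y^{\sigma_3(k)}$ in $\overline{\mathbf{Q}}$ is false when $\sigma_3(k)=m_3-1\geq 2$, but in that case ($k$ even, so $k-1\geq 1$) the relevant vertex is forced to be $y^{\sigma_3(k-1)}=y$, so the arrow actually needed does exist in every subcase and the argument stands.
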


\begin{proof}
We will prove the formula for $d_n$ by induction on $n$.
By definition, $d_1(1\ot u_{100})= w\ot u_{000}$,
$d_1(1\ot u_{010} ) = y \ot u_{000}$, and $d_1(1\ot u_{001})= x\ot u_{000}$,
and these values agree with the claimed formula for $d_1$.

Assume the formula holds for $d_{n-1}$.
We first consider the case $j=k=0$ and $i >0$: 
\begin{eqnarray*}
  d_n(1\ot u_{i00}) &=& w^{\sigma_1(i)}\ot u_{i-1,0,0} - s_{n-2} d_{n-1} (w^{\sigma_1(i)}\ot u_{i-1,0,0})\\
   &=& w^{\sigma_1(i)}\ot u_{i-1,0,0} - s_{n-2}(w^{\sigma_1(i)} (w^{\sigma_1(i-1)} \ot u_{i-2,0,0}))\\
   &=& w^{\sigma_1(i)}\ot u_{i-1,0,0} - s_{n-2}(0) \ \ = \ \ w^{\sigma_1(i)}\ot u_{i-1,0,0} ,
\end{eqnarray*}
since $ w^{\sigma_1(i)}w^{\sigma_1(i-1)} = 0$ in the algebra $A$.
This outcome agrees with the stated formula for $d_n$.
Next consider the case $k=0$ and $j>0$, applying the construction of $s_{n-2}$
described in the proof of Theorem~\ref{thm:Anick}:
The vertex $x^{\sigma_2(j)}$ is last in the path $u_{ij0}$, so by induction,  
\begin{eqnarray*}
   d_n(1\ot u_{ij0}) & = & x^{\sigma_2(j)} \ot u_{i,j-1,0} - s_{n-2} d_{n-1}(x^{\sigma_2(j)}
          \ot u_{i,j-1,0} ) \\
   &=&  x^{\sigma_2(j)} \ot u_{i,j-1,0} - s_{n-2} ( (-1)^{j-1} w^{\sigma_1(i)}
     x^{\sigma_2(j)} \ot u_{i-1,j-1,0}) \\
   &=&  x^{\sigma_2(j)}\ot u_{i,j-1,0} + (-1)^j w^{\sigma_1(i)} \ot u_{i-1,j,0} ,
\end{eqnarray*}
since $x^{\sigma_2(j)} x^{\sigma_2(j-1)} = 0$ in $A$.
This agrees with the stated formula for $d_n$.
In case $k>0$,
since the vertex labeled $y^{\sigma_3(k)}$ is the last in the path $u_{ijk}$, by
induction, since $y^{\sigma_3(k)}y^{\sigma_3(k-1)} = 0$, 
\[
\begin{aligned}
   & d_n(1\ot u_{ijk}) \\
  & = y^{\sigma_3(k)} \ot u_{i,j,k-1} - s_{n-2} d_{n-1}
    (y^{\sigma_3(k)} \ot u_{i,j,k-1}) \\
   &= y^{\sigma_3(k)} \ot u_{i,j,k-1} - s_{n-2}(y^{\sigma_3(k)} ( (-1)^{k-1} x^{\sigma_2(j)}\ot u_{i,j-1,k-1}
   +(-1)^{j+k-1} w^{\sigma_1(i)}\ot u_{i-1,j,k-1}))\\
  &= y^{\sigma_3(k)}\ot u_{i,j,k-1} - s_{n-2} ((-1)^{k-1}x^{\sigma_2(j)}y^{\sigma_3(k)}\ot u_{i,j-1,k-1
   } + (-1)^{j+k-1} w^{\sigma_1(i)}y^{\sigma_3(k)}\ot u_{i-1, j, k-1}) .
\end{aligned}
\]
Compare the two terms comprising the argument of $s_{n-2}$;
they are
$x^{\sigma_2(j)}y^{\sigma_3(k)}\ot u_{i,j-1,k-1}$
and $w^{\sigma_1(i)}y^{\sigma_3(k)}\ot u_{i-1, j, k-1}$, up to sign.
These terms have the same total degree, since each arises via an application
to $u_{i,j,k}$ of some differential maps (that do not change total degree).
Thus we must compare them lexicographically, and we see that 
$x^{\sigma_2(j)}y^{\sigma_3(k)}\ot u_{i,j-1,k-1}$ is the higher 
of the two terms. 
The first step of applying $s_{n-2}$ thus 
involves the term $x^{\sigma_2(j)}\ot u_{i,j-1,k}$ corresponding to this expression
as the first term on the right side of equation~(\ref{eqn:sn-1}).
Continuing by working inductively, with appropriate signs, we obtain
\[
    y^{\sigma_3(k)}\ot u_{i,j,k-1} + (-1)^k x^{\sigma_2(j)} \ot u_{i,j-1,k}
   + (-1)^{j+k} w^{\sigma_1(i)} \ot u_{i-1, j, k} ,
\]
as desired.
\end{proof}

Next we show that the Anick resolution is isomorphic to a twisted
tensor product resolution for this small example.

\begin{lemma}\label{lem:tp}
Let $A=\k[w,x,y]/(w^{m_1},x^{m_2},y^{m_3})$.
The Anick resolution $P_{\bu}$ for $A$ is equivalent to the total complex
$K_{\bu}$ of the tensor product of the minimal resolutions of
$A_w= \k[w]/(w^{m_1})$,
$A_x=\k[x]/(x^{m_2})$, and $A_y=\k[y]/(y^{m_3})$, that is,
for each $n$ there is an $A$-module isomorphism $\psi_n:P_n\rightarrow K_n$
and $\psi_{\bu}$ is a chain map lifting the identity map on $\k$.
\end{lemma}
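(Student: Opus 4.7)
The plan is to exhibit an explicit $A$-module chain isomorphism $\psi_{\bu}: P_{\bu} \to K_{\bu}$ lifting $\id_{\k}$, by matching free generators in each degree via their combinatorial indices and then checking that the two differential formulas already computed agree term by term.

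First, I would describe $K_{\bu}$ concretely. Since $A = A_w \ot A_x \ot A_y$ as commutative algebras, the minimal resolutions
\[
  \cdots \xrightarrow{w^{m_1-1}\cdot} A_w \xrightarrow{w\cdot} A_w \xrightarrow{w^{m_1-1}\cdot} A_w \xrightarrow{w\cdot} A_w \xrightarrow{\varepsilon} \k,
\]
and analogously for $A_x$ and $A_y$, tensor together to give a free $A$-module complex. I would take $K_{\bu} = \Tot(P_{\bu}(A_y) \ot P_{\bu}(A_x) \ot P_{\bu}(A_w))$ (with the factors in precisely this order, so that the $y$ factor is outermost). In degree $n$ the module $K_n$ has an $A$-basis $\{\phi_{i,j,k} \mid i+j+k=n\}$, where $\phi_{i,j,k}$ stands for the tensor of the $k$-th, $j$-th, $i$-th basis vectors of the three minimal resolutions. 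With the standard Koszul sign rule for the total complex, the differential reads
\[
  d(\phi_{i,j,k}) = y^{\sigma_3(k)} \phi_{i,j,k-1} + (-1)^k x^{\sigma_2(j)} \phi_{i,j-1,k} + (-1)^{k+j} w^{\sigma_1(i)} \phi_{i-1,j,k}.
\]

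Next, I would define $\psi_n: P_n \to K_n$ by $\psi_n(1 \ot u_{i,j,k}) = \phi_{i,j,k}$ and extend $A$-linearly. The previous lemma already shows that paths of length $n$ in $\overline{\mathbf Q}$ starting at $1$ are in bijection with triples $(i,j,k)$ with $i+j+k=n$: the path first traverses the $w$-cycle $i$ times, then the $x$-cycle $j$ times, then the $y$-cycle $k$ times. Hence $\psi_n$ sends an $A$-basis of $P_n$ bijectively to an $A$-basis of $K_n$, so it is an $A$-module isomorphism for every $n$. The degree zero map is the identity on $A$, so $\psi_{\bu}$ lifts $\id_{\k}$.

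Finally, I would check $\psi_{\bu}$ is a chain map. This is now immediate: the formula for $d_n(1\ot u_{i,j,k})$ in the preceding lemma agrees, term by term and sign by sign, with the formula for $d(\phi_{i,j,k})$ above. The only conceptual point — which is the one place where care is needed — is selecting the correct ordering of the three tensor factors in $K_{\bu}$ so that the signs $(-1)^k$ and $(-1)^{k+j}$ introduced by the total complex convention match the signs arising recursively from Anick's construction. The tensor order $A_y \ot A_x \ot A_w$ does exactly this, which is why I would set it up that way from the start; the remainder of the verification is a direct comparison of coefficients and is the easiest part of the argument.
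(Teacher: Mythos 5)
Your proposal is correct and follows essentially the same route as the paper: both define $K_{\bu}=\Tot(P(A_y)\ot P(A_x)\ot P(A_w))$, send $1\ot u_{ijk}\mapsto \phi_{ijk}$ using the bijection between length-$n$ paths from $1$ in $\overline{\mathbf{Q}}$ and triples $(i,j,k)$, and verify the chain-map property by matching the total-complex differential term by term (including signs) with the Anick differential formula from the preceding lemma.
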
 

\begin{proof}
Let $P(A_w)$ be the following free resolution of $\k$ as an $A$-module:
\[
\begin{xy}*!C\xybox{\xymatrixcolsep{3pc}
\xymatrix{
   P(A_w): \quad \cdots\ar[r]^{\qquad (w^{m_1-1})\cdot} & A_w \ar[r]^{w\cdot}
                 & A_w \ar[r]^{(w^{m_1-1})\cdot}  & A_w \ar[r]^{w\cdot}
     & A_w \ar[r]^{\varepsilon} & \k\ar[r] & 0.}}
\end{xy}
\]
Let $P(A_x)$ and $P(A_y)$ be similar free resolutions of $\k$ 
as an $A_x$-module and as an $A_y$-module, respectively. 
Let $K_{\bu} = \mbox{Tot}(P(A_y)\ot P(A_x)\ot P(A_w))$, be the total complex of the tensor product
of these three complexes.
Let $\delta$ denote the differential on $P_{\bu}$.

We will show that $P_n\cong K_n$ as an $A$-module for each $n$
and that such isomorphisms may be chosen so as to constitute
a chain map between $P_{\bu}$ and $K_{\bu}$. 
We will prove this by induction on $n$, beginning with $n=0$ 
and $n=1$.
For $n=0$, note that $P_0= A \cong A_y\ot A_x\ot A_w = K_0$ and each maps onto $\k$ via $\varepsilon$.
We take $\psi_0$ to be this isomorphism. 

For $n=1$, note that $P_1 = A\ot \k \{w,x,y\}$, while $K_1$ is equal to 
\[
      (P(A_y)_1\ot P(A_x)_0\ot P(A_w)_0) 
          \oplus (P(A_y)_0\ot P(A_x)_1\ot P(A_w)_0)
          \oplus (P(A_y)_0\ot P(A_x)_0\ot P(A_w)_1 ) . 
\]
To keep track of degrees, let $\phi_{100}$ denote $1\ot 1\ot 1$ in
$P(A_y)_1\ot P(A_x)_0\ot P(A_w)_0$ and similarly $\phi_{010}$, $\phi_{001}$.
Let $\psi_1: P_1\rightarrow K_1$ be defined by
\[
   \psi_1(1\ot w) = \phi_{100}, \ \ \  \psi_1(1\ot x) = \phi_{010} \ \ \ \mbox{ and } 
   \ \ \ \psi_1(1\ot y) = \phi_{001} .
\]
More generally let $\phi_{ijk}$ denote $1\ot 1\ot 1$ in $P(A_y)_k\ot P(A_x)_j\ot P(A_w)_i$.
Recall similar notation $u_{ijk}$ for free basis elements of $P_n$
described above. 
Define $\psi_n: P_n\rightarrow K_n$ as follows:
\[
  \psi_n (1\ot u_{ijk}) =  \phi_{ijk} .
\]
Extend $\psi_n$ to an $A$-module isomorphism.

The differential on $K_{\bu}$ may be written as 
\[
    d_n (\phi_{ijk}) = y^{\sigma_3(k)}\phi_{i,j,k-1}
   + (-1)^k x^{\sigma_2(j)} \phi_{i,j-1,k} 
     + (-1)^{j+k} w^{\sigma_1(i)} \phi_{i-1,j,k} .
\]
Comparing with Lemma~\ref{lem:tp}, we see that  
$\psi_{\bu}$ is a chain map.
\end{proof}

\section{Finite generation of some cohomology rings}
\label{sec:fg}

We now apply the constructions of twisted tensor product and Anick resolutions discussed in Sections~\ref{sec:twisted} and \ref{sec:Anick} to prove that the cohomology rings of the Hopf algebras in our settings (see~Sections~\ref{subsec:settingR} and \ref{subsec:settingH}) are finitely generated.


\subsection{Cohomology of the Nichols algebra and its bosonization}
\label{subsec:cohomology grH}

Let $R,G$ be defined as in Section~\ref{subsec:settingR}. 
Recall that we have used a twisted tensor product to 
construct a resolution $K_{\bu}$ for $\k$ as an $R$-module in Section~\ref{subsec:resoln R} and further to construct a resolution $Y_{\bu}$ for $\k$ as a module over the bosonization $R\# \kG$ in Section~\ref{subsec:resoln bosonization}. 

By examining the expression for the cohomology 
$\coh^2(R\# \kG , \k)$ given at the end of Section~\ref{subsec:resoln bosonization}, 
we see that it includes nonzero elements represented by 
the 2-cocycles $\phi^*_{2,0,0}$, $\phi^*_{0,2,0}$, $\phi^*_{0,0,2}$.
We find their cup products, which will be used 
in the proof of Theorem~\ref{thm:main1} below. 

To simplify notation, let 
\[
   \xi_x = \phi_{2,0,0}^*, \qquad \xi_y=\phi^*_{0,2,0}, \qquad  \xi_g=\phi^*_{0,0,2}.
\]
Using the projectivity of the resolution $Y_{\bu}$, one can 
show that these functions may be extended to chain maps 
on $Y_{\bu}$ as follows: 
\[
    \xi_x (\phi_{i,j,k})  =  \phi_{i-2,j,k},  \qquad
   \xi_y (\phi_{i,j,k})  =  \phi_{i,j-2,k}, \qquad 
    \xi_g(\phi_{i,j,k})  =  \phi_{i,j,k-2}, 
\]
for all $i,j,k$, where we set $\phi_{i',j',k'}=0$ if any one of $i',j',k'$ is negative. Consequently, $\xi_x,\xi_y,\xi_g$ are generators of a polynomial subalgebra 
$\k[\xi_x,\xi_y,\xi_g]$ of $\coh^*(R\# \kG, \k)$ in even degrees.
For example, the above formulas can be used to show that
\[
   (\phi^*_{2,0,0})^2 = \phi^*_{4,0,0} \qquad \mbox{ and } \qquad 
   \phi_{2,0,0}^*\smile \phi^*_{0,2,0} = \phi^*_{2,2,0}=\phi^*_{0,2,0}\smile\phi^*_{2,0,0}
\]
and generally if $i,j,k, i',j',k'$ are all even, then
\[
    \phi^*_{i,j,k}\smile \phi^*_{i',j',k'} = \phi^*_{i+i',j+j',k+k'} .
\]

We will also need 
the following lemma, which is \cite[Lemma 2.5]{MPSW} as
adapted from~\cite[Lemma~1.6]{FS}. 

\begin{lemma}\label{lem:MPSW}
Let $E^{p,q}_1 \implies E^{p+q}_{\infty}$ be a multiplicative
spectral sequence of $\k$-algebras concentrated in the half
plane $p+q\geq 0$, and let $B^{*,*}$ be a bigraded commutative
$\k$-algebra concentrated in even (total) degrees.
Assume that there exists a bigraded map of algebras from $B^{*,*}$
to $E_1^{*,*}$ such that the image of $B^{*,*}$ consists
of permanent cycles, and $E_1^{*,*}$ is a noetherian module
over the image of $B^{*,*}$.
Then $E^*_{\infty}$ is a noetherian module over $\Tot (B^{*,*})$. 
\end{lemma}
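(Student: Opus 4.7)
The strategy is to lift the noetherian condition from $E_1^{*,*}$ all the way to $E_\infty^{*,*}$ by working entirely inside $E_1^{*,*}$ as a $B^{*,*}$-module. The crux will be that both the cumulative boundaries $\widetilde B_\infty=\bigcup_r\widetilde B_r$ and the cumulative cycles $\widetilde Z_\infty=\bigcap_r\widetilde Z_r$, viewed as submodules of $E_1^{*,*}$ with $E_\infty^{*,*}\cong\widetilde Z_\infty/\widetilde B_\infty$, are $B^{*,*}$-submodules to which the noetherian hypothesis applies directly.

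First, I would verify that each page $E_r^{*,*}$ inherits a compatible $B^{*,*}$-module structure and that every differential $d_r$ is $B^{*,*}$-linear. Because the image of $B^{*,*}$ consists of permanent cycles, every element $b\in B^{*,*}$ survives to a class on each $E_r$, giving the action. The multiplicative Leibniz rule reads $d_r(b\cdot x)=d_r(b)\cdot x+(-1)^{|b|}\,b\cdot d_r(x)$, and since $b$ has even total degree with $d_r(b)=0$, this collapses to $d_r(b\cdot x)=b\cdot d_r(x)$. This is exactly where the hypothesis that $B^{*,*}$ is concentrated in even total degrees enters---it erases the potential sign obstruction to $B$-linearity.

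With the differentials now $B$-linear, each $\widetilde Z_r$ and $\widetilde B_r$ (defined as the preimage in $E_1^{*,*}$ of $\ker d_r$, respectively $\operatorname{im} d_r$, on $E_r^{*,*}$) is a $B^{*,*}$-submodule of $E_1^{*,*}$. Since $E_1^{*,*}$ is noetherian over the image of $B^{*,*}$, the ascending chain $\widetilde B_1\subseteq\widetilde B_2\subseteq\cdots$ stabilizes at some $r_0$, so $\widetilde B_\infty=\widetilde B_{r_0}$ is finitely generated. Simultaneously, $\widetilde Z_\infty$ is a submodule of the noetherian module $E_1^{*,*}$, hence is itself noetherian. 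Therefore $E_\infty^{*,*}=\widetilde Z_\infty/\widetilde B_\infty$ is a quotient of a noetherian $B^{*,*}$-module, hence is noetherian over the image of $B^{*,*}$; pulling back along the bigraded surjection $\Tot(B^{*,*})\twoheadrightarrow\operatorname{image}(B^{*,*}\to E_1^{*,*})$ then yields that $E_\infty^{*,*}$ is noetherian as a $\Tot(B^{*,*})$-module.

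The step I expect to be the main obstacle is the first one---making rigorous that $d_r$ is $B$-linear and that the $B$-actions on successive pages are all governed by a single $B$-action on $E_1^{*,*}$, so that the submodules $\widetilde Z_r,\widetilde B_r\subseteq E_1^{*,*}$ are genuinely $B$-stable. This is largely bookkeeping about the multiplicative spectral sequence, but it is essential; once the even-degree hypothesis erases the signs in the Leibniz rule, the rest reduces to two standard noetherian-module facts, namely stabilization of ascending chains and finite generation of submodules of a noetherian module.
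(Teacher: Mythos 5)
Your proposal is correct, and it is essentially the standard argument behind this lemma, which the paper itself does not prove but quotes from \cite[Lemma 2.5]{MPSW} as adapted from \cite[Lemma 1.6]{FS}: the even-total-degree and permanent-cycle hypotheses make every $d_r$ linear over the image of $B^{*,*}$, the towers of cycle and boundary submodules inside $E_1^{*,*}$ are then $B$-stable, and $E_\infty^{*,*}\cong \widetilde Z_\infty/\widetilde B_\infty$ is a subquotient of the noetherian module $E_1^{*,*}$, hence noetherian over $\Tot(B^{*,*})$. (The stabilization of the chain $\widetilde B_1\subseteq\widetilde B_2\subseteq\cdots$ is not even needed for this conclusion, since a quotient of a noetherian module by any submodule is noetherian.)
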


We are now ready to prove our first main theorem. 

\begin{thm}\label{thm:main1}
Let $R := \k\langle x,y \rangle / (x^p, \ y^p , \ yx-xy-\frac{1}{2}x^2)$ be the Nichols algebra defined over a field $\k$ of prime characteristic $p>2$, and $G:=\langle g\rangle$ be a cyclic group of order $q$ divisible by $p$, 
acting on $R$ by automorphisms with ${}^g x = x$ and ${}^g y = x+y$.
Then the cohomology ring of the bosonization, 
$\coh^*(R \# \kG, \k)$, is finitely generated.
\end{thm}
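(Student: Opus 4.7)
The plan is to apply Lemma~\ref{lem:MPSW} to the May spectral sequence coming from an appropriately chosen filtration of $R\#\kG$, using the cocycles $\xi_x,\xi_y,\xi_g \in \coh^2(R\#\kG,\k)$ already identified above as the source of permanent cycles.

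First, I would equip $R\#\kG$ with the filtration that assigns $x$ filtration degree~$1$, $y$ filtration degree~$2$, and $w := g-1$ a positive filtration degree. In the associated graded, the Jordan relation $yx-xy-\tfrac12 x^2=0$ collapses to $\bar y\bar x=\bar x\bar y$ (since $x^2$ has strictly smaller total degree than $yx$ and $xy$), and similarly the smash-product relation $gy-yg=xg$ collapses to $\bar w\bar y=\bar y\bar w$. Together with the homogeneous relations $x^p=0$, $y^p=0$, and (a truncated form of) $g^q=1$, this presents $\gr(R\#\kG)$ as a commutative truncated polynomial ring in the generators $\bar x,\bar y,\bar w$.

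Next, I would compute $\coh^*(\gr(R\#\kG),\k)$ via K\"unneth; since each factor $\coh^*(\k[z]/(z^n),\k)$ for $n\geq 2$ and $p>2$ is isomorphic to $\Lambda(\eta)\otimes \k[\xi]$ with $|\eta|=1$, $|\xi|=2$, the $E_1$-page of the resulting multiplicative May spectral sequence $E_1^{*,*}\Rightarrow \coh^{*}(R\#\kG,\k)$ is a tensor product of exterior and polynomial algebras. In particular $E_1^{*,*}$ is a free, finitely generated (and so noetherian) module over its polynomial subalgebra $\k[\xi_{\bar x},\xi_{\bar y},\xi_{\bar w}]$ generated by the three degree-$2$ even elements.

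The key step is to lift the generators $\xi_{\bar x},\xi_{\bar y},\xi_{\bar w}$ to permanent cycles. The cocycles $\xi_x=\phi^*_{2,0,0}$, $\xi_y=\phi^*_{0,2,0}$, $\xi_g=\phi^*_{0,0,2}$ constructed via explicit chain maps on $Y_{\bullet}$ earlier in this subsection are already classes in $\coh^2(R\#\kG,\k)$ and so are automatically permanent cycles; I would verify that, with respect to the chosen filtration, each of them has the correct leading term, mapping to the corresponding $\xi_{\bar x},\xi_{\bar y},\xi_{\bar w}$ on the $E_1$-page. This yields a bigraded algebra map $B:=\k[\xi_x,\xi_y,\xi_g]\to E_1^{*,*}$ whose image consists of permanent cycles and over which $E_1^{*,*}$ is noetherian. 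Lemma~\ref{lem:MPSW} then delivers that $E_\infty^{*}$ is a noetherian module over $\Tot(B)=B$, so that the associated graded of $\coh^*(R\#\kG,\k)$ is finitely generated over $B$, forcing $\coh^*(R\#\kG,\k)$ itself to be a finitely generated $B$-module and in particular a finitely generated $\k$-algebra. The main obstacle I anticipate is the last verification: ensuring that the explicit cocycles $\xi_x,\xi_y,\xi_g$ are not ``absorbed'' into strictly lower filtration layers (which would force their images in $E_1^{*,*}$ to be nilpotent and break the noetherian hypothesis). This is a careful filtration-tracking computation that relies on the explicit chain-map formulas for $\xi_x$, $\xi_y$, and $\xi_g$ on the free generators $\phi_{i,j,k}$ of $Y_{\bullet}$.
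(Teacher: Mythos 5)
Your proposal follows the paper's proof in all essentials: the same May spectral sequence for a filtration of $R\#\kG$ whose associated graded is a truncated polynomial ring, the same polynomial subalgebra $B=\k[\xi_x,\xi_y,\xi_g]$ of permanent cycles supplied by the explicit cocycles $\phi^*_{2,0,0},\phi^*_{0,2,0},\phi^*_{0,0,2}$ on the twisted tensor product resolution $Y_{\bu}$, the same appeal to Lemma~\ref{lem:MPSW}, and the same final lift from the associated graded to $\coh^*(R\#\kG,\k)$ (which the paper carries out via a Zariskian filtration argument and Evens' result, where you simply say ``forcing''). The only substantive difference is cosmetic: you use a weight filtration ($x\mapsto 1$, $y\mapsto 2$, $w\mapsto$ positive) where the paper uses the filtration induced by the degree lexicographic order with $w<x<y$; both collapse the Jordan and smash-product relations to commutativity.

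There is, however, one genuine gap: you never reduce to the case $q=p^a$, and your parenthetical ``(a truncated form of) $g^q=1$'' hides a real problem. If $q=p^a\ell$ with $\ell>1$ prime to $p$, then $w=g-1$ does \emph{not} satisfy $w^q=0$; instead $(1+w)^q-1=(1+w^{p^a})^{\ell}-1$ has lowest-degree term $\ell\, w^{p^a}$ with $\ell$ invertible, so $\kG\cong \k[w]/\bigl(w^{p^a}u(w)\bigr)$ with $u(0)\neq 0$. Consequently, for any positive-weight filtration the associated graded of $R\#\kG$ is at most $p^{a+2}$-dimensional (not $p^2q$), the filtration is not separated ($\bigcap_n F^n\neq 0$, since the non-principal block lies in every layer), and convergence of the May spectral sequence to $\gr\coh^*(R\#\kG,\k)$ breaks down; so the step ``$\gr(R\#\kG)$ is a commutative truncated polynomial ring in $\bar x,\bar y,\bar w$'' fails as stated. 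The paper disposes of this at the outset: write $G\cong \Z/p^a\Z\times\Z/\ell\Z$, note the $\Z/\ell\Z$-part acts trivially on $R$ (because ${}^{g^s}y=y+sx$ and $p\mid s$ for its elements), so $R\#\kG\cong (R\#\k\Z/p^a\Z)\ot\k\Z/\ell\Z$ and the cohomology is unchanged by the semisimple factor; after this reduction $w^{q}=0$ holds on the nose and your filtration argument goes through exactly as you describe.
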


\begin{proof}
Without loss of generality we may assume that $q = p^a$
for some $a$.
To see this, note that $G \cong \Z/ p^a \Z \times \Z/\ell \Z$ for
some $\ell$ coprime to $p$ and some $a \geq 1$.
Elements of the subgroup of $G$ that is isomorphic to $\Z/\ell \Z$ act trivially
on $R$ since their orders are coprime to $p$, and so $R\# \k G \cong
  (R\# \k \Z/ p^a\Z)\ot (\k \Z/\ell \Z )$ as an algebra.
Thus the cohomology of $R\# \k G$ is the graded tensor product of
the cohomology of $R\# \k \Z/p^a\Z$ and of $\k \Z/\ell \Z$.
The cohomology of $\k \Z/\ell\Z$ is concentrated in degree~0, where it
is simply $\k$, since $\ell$ is coprime to $p$. 

Now assume that $q = p^a$. 
Let $w=g-1$ and note that since the order of $g$ is $q$, 
\[
  R \# \kG\cong  \k\langle w,x,y\rangle / ( w^q, \ x^p, \ y^p, \ yx-xy-\frac{1}{2}x^2, \
     xw-wx, \ yw-wy-wx-x ) .
\]
Assign the degree lexicographic order on monomials in $w,x,y$,
with $w <x <y$.
This gives rise to an
$\N$-filtration on $R\# \k G$ for which 
the associated graded algebra 
$\gr (R\# \kG) \cong \k[w,x,y]/(w^q,x^p, y^p)$. 
(See, e.g.,~\cite[Theorem~4.6.5]{BGV}.)

We will apply the May spectral sequence \cite{M} in our context, for which:
$$E_1^{*,*} \cong \coh^*(\gr (R\# \kG),\k) \Longrightarrow E_{\infty}^{*,*} \cong \gr \coh^*(R\# \kG,\k).$$ 
The algebra $\gr (R\# \kG) \cong \k[w,x,y]/(w^q,x^p,y^p)$ 
has a  resolution  given by a
tensor product as in Lemma~\ref{lem:tp}, equivalently by 
repeating the twisted tensor product construction in 
Section~\ref{subsec:resoln bosonization} but with trivial twisting. 
We find that there are elements in degree $2$ of 
$\coh^*(\gr (R\# \kG),\k)$ corresponding to $\xi_w,\xi_x,\xi_y \in \coh^2(R\# \kG , \k)$ 
(here, we identify $\xi_w = \xi_g$), 
and we use the same notation for them, by abuse of notation. 
These elements are permanent cycles in the May spectral sequence: 
We have already seen that these cocycles exist for the filtered algebra $R\# \kG$, 
as constructed in Section~\ref{subsec:resoln bosonization}. 
They are permanent cocycles as we may identify their images with
the corresponding elements of $\coh^*(R\# \kG,\k)$.

Specifically, let $B^{*,*} = \k[\xi_w,\xi_x,\xi_y]$. By identifying
$\coh^*(\gr (R\# \kG),\k)$ with group cohomology, or by
arguments in \cite[Section 4]{MPSW}, we see that 
$E^{*,*}_1 \cong \coh^*(\gr (R\# \kG),\k)$ is a noetherian  $B^{*,*}$-module, 
(it is generated over $B^{*,*}$ by some elements $\eta_w,\eta_x,\eta_y$ in degree 1). 
By Lemma~\ref{lem:MPSW}, $E^{*,*}_{\infty} \cong \gr \coh^*(R\# \kG,\k)$ is a noetherian module over
$\k[\xi_w,\xi_x,\xi_y]$. By an appropriate Zariskian filtration \cite[Chapter 2]{HvO}, 
one can lift information from the associated graded ring to the filtered ring; 
thus, $\coh^*(R\# \kG ,\k)$ is noetherian over $\k[\xi_w,\xi_x,\xi_y]$. 
Therefore, by \cite[Proposition 2.4]{Evens}, 
$\coh^*(R\# \kG ,\k)$ is finitely generated as an algebra. 
\end{proof}

\begin{remark}{\em 
There is a different proof of Theorem~\ref{thm:main1} that
is closer to Evens' original proof of finite generation of group cohomology. 
See~\cite[Theorem~3.1, Section~5, and Erratum]{NWi} for details.  }
\end{remark}


\subsection{Cohomology of some pointed Hopf algebras of dimension~27} 
\label{subsec:cohomology lifting}

In this section, we let $\k$ be a field of characteristic~$p=3$ 
and consider the Hopf algebras $H(\epsilon,\mu,\tau)$ defined in Section~\ref{subsec:settingH}. 
Consider $\k$ to be the $H(\epsilon,\mu,\tau)$-module on which $w,x,y$ each act as $0$.

\begin{thm}\label{thm:main2}
Let $H(\epsilon,\mu,\tau)$ be a Hopf algebra of dimension~27 over an
algebraically closed 
field $\k$ of characteristic $p=3$, as defined in Section~\ref{subsec:settingH}.
The cohomology $\coh^*( H(\epsilon,\mu,\tau) , \k)$ is finitely generated. 
\end{thm}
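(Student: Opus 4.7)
The strategy mirrors the proof of Theorem~\ref{thm:main1}, but with the Anick resolution of Section~\ref{sec:Anick} replacing the twisted tensor product resolution. First, I would equip $H:=H(\epsilon,\mu,\tau)$ with an $\N$-filtration coming from the degree lexicographic ordering $w<x<y$ on its generators (cf.\ \cite[Theorem~4.6.5]{BGV}). The leading terms of the six defining relations of $H$, read directly from the presentation in Section~\ref{subsec:settingH}, are $w^3,\, x^3,\, y^3,\, xw,\, yw,\, yx$, which are precisely the tips of the truncated polynomial ring
$$A := \k[w,x,y]/(w^3,\, x^3,\, y^3)$$
treated in Section~\ref{subsec:tp}. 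Hence $\gr H \cong A$ as graded algebras, and the May spectral sequence takes the form
$$E_1^{*,*} \cong \coh^*(A,\k) \Longrightarrow E_\infty^{*,*} \cong \gr \coh^*(H,\k).$$

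Using the explicit differential of the Anick resolution of $A$ from Section~\ref{subsec:tp} (with $m_1=m_2=m_3=3$), the duals $u_{200}^*,\, u_{020}^*,\, u_{002}^*$ of the three degree-$2$ basis elements are cocycles, yielding classes $\xi_w,\xi_x,\xi_y \in \coh^2(A,\k)$. Either by the equivalence of Lemma~\ref{lem:tp} with a tensor product of minimal resolutions, or by identifying $\coh^*(A,\k)$ with the group cohomology of $(\Z/3\Z)^3$, the cohomology ring $\coh^*(A,\k)$ is free, and in particular noetherian, over the polynomial subalgebra $B^{*,*}:=\k[\xi_w,\xi_x,\xi_y]$, with basis given by the products of degree-$0$ and degree-$1$ generators.

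The main obstacle is showing that $\xi_w,\xi_x,\xi_y$ are images of permanent cycles in the May spectral sequence. To produce such lifts, I would build the Anick resolution of $\k$ over $H$ itself. Because the leading terms of the relations of $H$ coincide with those of $A$, the tips, prefixes, and reduced quiver $\overline{\mathbf{Q}}$ remain the same, so the underlying free modules $H\ot \k C_n$ and their basis sets $C_n$ carry over verbatim; only the differentials acquire lower-filtration correction terms coming from the nonhomogeneous parts of the defining relations. Running the inductive construction of Theorem~\ref{thm:Anick} over $H$ yields explicit formulas for these corrected differentials, and the core technical step is to verify that $u_{200}^*,\, u_{020}^*,\, u_{002}^*$, possibly after perturbation by lower-filtration cochains when $\epsilon,\mu,\tau$ are nonzero, extend to $H$-module cocycles on this resolution. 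The existence of such perturbations is a standard lifting argument using projectivity, carried out degree by degree in the filtration; any such cocycle represents a class in $\coh^2(H,\k)$ whose image in $E_\infty^{*,*}$ equals $\xi_w,\xi_x,\xi_y$, hence is a permanent cycle.

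With these permanent cycles in hand, Lemma~\ref{lem:MPSW} applied to the inclusion $B^{*,*}\hookrightarrow E_1^{*,*}$ yields that $E_\infty^{*,*}\cong \gr \coh^*(H,\k)$ is noetherian over $B^{*,*}$. A Zariskian filtration argument \cite[Chapter~2]{HvO} lifts the noetherian property from the associated graded back to $\coh^*(H,\k)$, and \cite[Proposition~2.4]{Evens} then gives that $\coh^*(H,\k)$ is finitely generated as a $\k$-algebra.
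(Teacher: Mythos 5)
Your overall architecture matches the paper's: filter $H(\epsilon,\mu,\tau)$ by degree lex with $w<x<y$, identify $\gr H(\epsilon,\mu,\tau)\cong \k[w,x,y]/(w^3,x^3,y^3)$, run the May spectral sequence, exhibit $\xi_w,\xi_x,\xi_y$ as permanent cycles in degree~2, and conclude via Lemma~\ref{lem:MPSW}, the Zariskian filtration argument, and \cite[Proposition~2.4]{Evens}. The setup, the identification of the $E_1$-page, and the concluding steps are all correct and are exactly what the paper does.

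However, there is a genuine gap at the one step that carries the mathematical content. You write that $u_{200}^*,u_{020}^*,u_{002}^*$ extend to cocycles over $H$ ``possibly after perturbation by lower-filtration cochains,'' and you assert that ``the existence of such perturbations is a standard lifting argument using projectivity.'' That assertion is false as a general principle: whether a class on the $E_1$-page can be corrected by lower-filtration terms to a genuine cocycle on the filtered resolution is precisely the question of whether it survives the higher differentials $d_r$ of the spectral sequence, and this can fail. (If projectivity alone guaranteed such corrections, every May spectral sequence would degenerate at $E_1$.) Projectivity lets you lift chain maps and compare resolutions; it does not produce a cocycle with a prescribed leading term. The paper resolves this by actually computing the Anick differentials $d_2$ and $d_3$ for $H(\epsilon,\mu,\tau)$ itself, via the recursive algorithm of Theorem~\ref{thm:Anick}, and then observing the key structural fact that every coefficient appearing in $d_3$ of a free basis element lies in the augmentation ideal. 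Hence $d_3^*\bigl((w^3)^*\bigr)=d_3^*\bigl((x^3)^*\bigr)=d_3^*\bigl((y^3)^*\bigr)=0$ on the nose --- no perturbation is needed, but this is a nontrivial computation (note that the analogous statement is \emph{not} true for $d_2$, whose values do involve constant terms such as $-\epsilon\ot x$). Your proof would be complete only after carrying out this computation, or an equivalent verification that the three classes survive; as written, the claim that they do is assumed rather than proved.
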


\begin{proof}
Choose the ordering  $w<x<y$ as before, and the corresponding
degree lexicographic ordering on monomials.
Due to the form of the relations,  this gives rise to 
an $\N$-filtration on $H(\epsilon,\mu,\tau)$ for which 
the associated graded algebra is 
$\gr  H(\epsilon,\mu,\tau)\cong \k [w,x,y]/(w^3,x^3,y^3)$.
(See, e.g.,~\cite[Theorem~4.6.5]{BGV}.) 
We consider the Anick resolution $P_{\bu}$ for $H(\epsilon,\mu,\tau)$,
filtered correspondingly, and the resulting 
May spectral sequence of the complex 
$\Hom_{H(\epsilon,\mu,\tau)} (P_{\bu}, \k)$.
This is a multiplicative spectral sequence under the product induced by a diagonal map
$P_{\bu}\rightarrow P_{\bu}\ot P_{\bu}$ lifting the identity map on $\k$.
Note that the associated graded resolution to $P_{\bu}$ is $\gr P_{\bu}$,
which we may identify with 
the Anick resolution for $\gr H(\epsilon,\mu,\tau)$,
described in Lemma~\ref{lem:tp}. 

The Anick resolution for $A=H(\epsilon,\mu,\tau)$ has the same free basis
sets $C_n$ as that for $\gr H(\epsilon,\mu,\tau)$ described in Section~\ref{subsec:tp}.
Direct calculations show that it has 
the following differentials in degrees~2 and~3 (recall that the
parameter $\epsilon$ only
takes the values 0 or 1):
By the proof of Theorem~\ref{thm:Anick},  values of $d_2$ on tips
correspond to the relations, specifically, 
\begin{eqnarray*}
   d_2(1\ot w^3)&=& w^2\ot w , \\
   d_2(1\ot x^3) &=& x^2\ot x - \epsilon \ot x , \\
   d_2(1\ot y^3) &=& y^2\ot y + \epsilon y \ot y + (\mu\epsilon - \tau - \mu^2) \ot y , \\
d_2(1\ot xw) & = &  x\ot w - w\ot x -\epsilon w\ot w - \epsilon \ot w , \\
d_2(1\ot yw) &=& y\ot w - w\ot y - w\ot x - 1\ot x 
   + (\mu-\epsilon)w\ot w + (\mu-\epsilon) \ot w ,\\
d_2(1\ot yx) & = & y\ot x - x\ot y + x\ot x - (\mu + \epsilon)\ot x 
    -\epsilon \ot y + \tau w\ot w - \tau \ot w .
\end{eqnarray*}
Values of $d_3$ require some computation, using the algorithm
outlined as part of the proof of Theorem~\ref{thm:Anick}, and
on free basis elements they are: 
\begin{eqnarray*}
    d_3(1\ot w^4)& =& w \ot w^3 , \ \ \ 
    d_3(1\ot x^4) \ \ = \ \ x\ot x^3 , \ \ \ 
    d_3(1\ot y^4) \ \  = \ \ y\ot y^3 , \\
d_3(1\ot xw^3) & =& x\ot w^3 - w^2\ot xw , \\
d_3(1\ot x^3 w) &=& x^2\ot xw + w\ot x^3 + \epsilon wx\ot xw
     + \epsilon x\ot xw + \epsilon w\ot xw , \\
d_3(1\ot yw^3) & = & y\ot w^3 - w^2\ot yw + w^2 \ot xw + w\ot xw , \\
d_3 (1\ot yxw) & = & y\ot xw - x\ot yw + w\ot yx + \epsilon w\ot yw
     + x\ot xw + (\mu+\epsilon) w\ot xw , \\
d_3(1\ot y^3w) &=& y^2\ot yw + w\ot y^3 + w y \ot yx + wx\ot yx + (\epsilon-\mu) wy\ot yw \\
  && + (\mu-\epsilon)wx\ot yw -\tau w^2\ot yw + y\ot yx - (\epsilon +\mu) y\ot yw \\
  && + \tau w^2 \ot xw + x\ot yx + (\mu-\epsilon) x\ot yw 
   + (\mu^2 -\epsilon\mu) w \ot yw + \tau w\ot xw ,\\
d_3(1\ot yx^3) & = & y\ot x^3 - x^2\ot yx + \tau wx\ot xw 
    +\epsilon x\ot yx -\tau x\ot xw + \epsilon \tau w\ot xw ,\\
d_3(1\ot y^3x) &=& y^2\ot yx + x\ot y^3 - xy\ot yx -\tau wx \ot yw -\tau wy\ot yw \\
   && + \tau w^2\ot yx + \tau wx\ot xw + \epsilon\tau w^2\ot yw 
  + (\epsilon\tau +\mu\tau) w^2\ot xw \\ && + \mu y\ot yx 
   +\tau y\ot yw -\mu x\ot yx + \tau x\ot xw \\
  && + \tau w\ot yx + (\epsilon\tau + \mu\tau) w\ot yw +\epsilon\tau w\ot xw .
\end{eqnarray*} 
For example, to find $d_3 (1\ot yw^3   )$, 
we first compute
\[
  d_3(1\ot y w^3)  =  y\ot w^3 - s_1 d_2 (y\ot w^3) 
  = y\ot w^3 - s_1(yw^2\ot w) .
\]
Now, using the relations, rewrite $ y w^2$
as $w^2 y - w^2x - wx + (\mu+\epsilon) w^2 + \epsilon w $
so that the above expression is 
\[
  = y\ot w^3 - s_1 (w^2 y\ot w -  w^2 x\ot w -  w x\ot w 
    + (\mu+\epsilon ) w^2 \ot w + \epsilon w\ot w ) .
\]
Now $d_2 (w^2\ot yw) = w^2y\ot w - w^2\ot x + (\mu - \epsilon) w^2\ot w $ 
and, adding and subtracting $ - w^2 \ot x + (\mu-\epsilon) w^2\ot w$,
the above may be rewritten as
\[
\begin{aligned}
   & = y\ot w^3 - s_1( w^2 y \ot w - w^2\ot x + (\mu - \epsilon) w^2\ot w
     + w^2\ot x - (\mu - \epsilon) w^2\ot w \\
   & \quad -w^2x\ot w - wx\ot w + (\mu +\epsilon) w^2\ot w + \epsilon w\ot w)\\
  & = y\ot w^3 - w^2\ot yw - s_1(  w^2 \ot x -  w^2 x\ot w -  wx\ot w
    -\epsilon w^2\ot w + \epsilon w\ot w )) . 
\end{aligned}
\]
For the next two steps, we note that
$d_2(w^2\ot xw) = w^2x\ot w - \epsilon w^2\ot w$
and $d_2 (w\ot xw) = wx\ot w - w^2\ot x - \epsilon w^2 \ot w - \epsilon w\ot w$
and so the above may be rewritten as
\[
\begin{aligned}
  & = y\ot w^3 - w^2\ot yw - s_1(  w^2\ot x -  w^2 x\ot w + \epsilon
    w^2\ot w +\epsilon w^2\ot w  - w x\ot w +\epsilon w\ot w )\\
  &= y\ot w^3 - w^2\ot yw + w^2\ot xw 
   - s_1(w^2\ot x -  wx\ot w + \epsilon w^2\ot w
   + \epsilon w\ot w)  . 
\end{aligned}
\]
We recognize the argument of $s_1$ above as $d_2(w\ot xw)$ and so
we obtain
the value of $d_3(1\ot yw^3)$ as claimed. 

Looking at the values of $d_3$ given above, 
note that the coefficients in the factor $A = H(\epsilon,\mu,\tau)$ 
of $A\ot \k C_2$ in
the image of each of these free basis elements (under $d_3$)
are in the augmentation ideal.
(A similar statement does {\em not} apply to $d_2$.) 
In particular, letting $(w^3)^*, \ldots $ denote elements in the dual
basis in $\Hom_{\k} (\k C_2, \k)\cong \Hom_A ( A\ot \k C_2 , \k)$,
where $A = H(\epsilon,\mu,\tau)$, 
to the tips $w^3, \ldots $ of $\k C_2$, 
it follows that
\[
  d_3^*((w^3)^*)=0, \ \ \  d_3^*((x^3)^*) = 0 , \ \ \ d_3^*((y^3)^*)=0 .
\]
Setting $\xi_w = (w^3)^*$, $\xi_x = (x^3)^*$, $\xi_y = (y^3)^*$, we
see that these functions are cocycles in $\Hom_{\k} (\k C_2 , \k)
\cong \Hom_A(A\ot \k C_2, \k )$.

It follows from the above observations that
$\xi_w , \xi_x, \xi_y$ are permanent cocycles in the May spectral sequence,
and we may use them in an application of Lemma~\ref{lem:MPSW}:
On the $E_1$-page, $\xi_w,\xi_x,\xi_y$ correspond to analogous 2-cocycles
on $\gr H(\epsilon,\mu,\tau)$ that generate a polynomial
subalgebra of its cohomology ring by a similar analysis to that 
in earlier sections.
That is,  by Lemma~\ref{lem:tp}, the Anick resolution
is essentially the same as the (twisted) tensor product resolution used earlier.
Now let $B = \k [\xi_w,\xi_x,\xi_y]$.
Let $\eta_w = (w)^*$, $\eta_x = (x)^*$, $\eta_y = (y)^*$ in
$\Hom_{\k}(\k C_1, \k)\cong \Hom_A (A\ot \k C_1, \k)$. 
The cohomology of $\gr H(\epsilon,\mu,\tau)$ is finitely generated
as a module over $B$, by $\eta_w,\eta_x,\eta_y$ and their
products (note $\eta_w^2=0$, $\eta_x^2=0$, $\eta_y^2=0$,
so these products constitute a finite set).
By Lemma~\ref{lem:MPSW}, using an appropriate Zariskian 
filtration~\cite[Chapter~2]{HvO}, the cohomology  
$\coh^*(H(\epsilon,\mu,\tau), \k)$ is noetherian over 
$\k [\xi_w,\xi_x,\xi_y]$.
By~\cite[Propostion~2.4]{Evens}, it  is finitely 
as an algebra. 
\end{proof}

\begin{remark}{\em 
An alternative proof of our earlier Theorem~\ref{thm:main1} would proceed
just as our above proof of Theorem~\ref{thm:main2}:
One could compute the differentials on the Anick resolution of the
algebra there to show existence of the needed elements
$\xi_w,\xi_x,\xi_y$.
We chose instead to use the twisted tensor product construction,
for which we were able to give formulas for the differentials in all
degrees, yielding a more explicit, if not shorter, presentation. 
}
\end{remark}



\begin{thebibliography}{KKZ}

\bibitem{Anick}  D.\ J.\ Anick,
{\em On the homology of associative algebras},
Trans.\ Amer.\ Math.\ Soc.\ \textbf{296} (1986), no.\ 2, 641--659.

\bibitem{BGV} J.\ L.\ Bueso, J.\ G\'omez-Torecillas, and A.\ Verschoren,
{\em Algorithmic Methods in Non-Commutative Algebra},
Kluwer Academic Publishers, 2003. 

\bibitem{BNPP} C.\ Bendel, D.\ K.\ Nakano, B.\ J.\ Parshall, and
C.\ Pillen, 
{\em Cohomology for quantum groups via the geometry of the nullcone},
Mem.\ Amer.\ Math.\ Soc.\ \textbf{229} (2014), no.\ 1077. 

\bibitem{CLW} C.\ Cibils, A.\ Lauve, and S.\ Witherspoon,
{\em Hopf quivers and Nichols algebras in positive 
characteristic},
Proc.\ Amer.\ Math.\ Soc.\ \textbf{137} (2009), no.\ 12, 4029--4041.

\bibitem{CU} S.\ Cojocaru and V.\ Ufnarovski,
{\em BERGMAN under MS-DOS and Anick's resolution},
Discrete Math.\ Theoretical Comp.\ Sci.\ \textbf{1} (1997), 139--147. 

\bibitem{Evens} L.\ Evens, {\em The cohomology ring of a finite group}, 
Trans.\ Amer.\ Math.\ Soc.\ \textbf{101} (1961), 224--239. 

\bibitem{FP} E.\ Friedlander and B.\ Parshall,
{\em Cohomology of infinitesimal and discrete groups},
Math.\ Ann.\ \textbf{273} (1986), 353--374. 

\bibitem{FS} E.\ Friedlander and A.\ Suslin,
{\em Cohomology of finite group schemes over a field},
Invent.\ Math.\ \textbf{127} (1997), no.\ 2, 209--270. 

\bibitem{ginzburg-kumar93} V.\ Ginzburg and S.\ Kumar, {\em Cohomology
of quantum groups at roots of unity}, Duke Math.\ J.\ \textbf{69} (1993),
179--198.

\bibitem{golod} E.\ Golod, {\em The cohomology ring of a finite $p$-group},
(Russian) Dokl.\ Akad.\ Nauk SSSR  \textbf{235} (1959), 703--706.

\bibitem{Gordon} I.\ G.\ Gordon, 
{\em Cohomology of quantized function algebras at roots of unity},
Proc.\ London Math.\ Soc.\ (3) \textbf{80} (2000), no.\ 2, 337--359. 


\bibitem{Green-Solberg} E.\ L.\ Green and \O.\ Solberg,
{\em An algorithmic approach to resolutions},
J.\ Symbolic Comput.\  \textbf{42} (2007), no.\ 11--12, 1012--1033. 


\bibitem{HvO} H.-S.\ Li and F.\ Van Oystaeyen, ``Zariskian filtrations", 
K-Monographs in Mathematics, \textbf{2}, Kluwer Academic Publishers, Dordrecht, 1996. 

\bibitem{MPSW} M.\ Mastnak, J.\ Pevtsova, P.\ Schauenburg, and
S.\ Witherspoon,
{\em Cohomology of finite dimensional pointed Hopf algebras},
Proc.\ London Math.\ Soc.\ (3) \textbf{100} (2010), no.\ 2, 377--404. 

\bibitem{Mj} S.\ Majid, {\em Crossed products by braided groups and bosonization}, 
J.\ Algebra \textbf{163} (1994), 165--190.

\bibitem{M} J.\ P.\ May, {\em The cohomology of restricted Lie algebras and of Hopf algebras}, 
J.\ Algebra \textbf{3} (1966), 123--146.

\bibitem{MO93} S.\ Montgomery, ``Hopf Algebras and Their Actions on Rings", CBMS Regional Conference Series in Mathematics, \textbf{82}, Amer.~Math.~Soc., Providence, RI, 1993.

\bibitem{NWa} V.\ C.\ Nguyen and X.-T.\ Wang,
{\em Pointed $p^3$-dimensional Hopf algebras in
positive characteristic}, arXiv:1609.03952. 

\bibitem{NWi} V.\ C.\ Nguyen and S.\ Witherspoon,
{\em Finite generation of the cohomology of some skew group
algebras},
Algebra and Number Theory \textbf{8} (2014), no.\ 7, 1647--1657; 
{\em Erratum}, in preparation. 

\bibitem{SW} A.\ V.\ Shepler and S.\ Witherspoon,
{\em Resolutions for twisted tensor products}, arXiv:1610.00583.

\bibitem{S} P.\ Shroff,
{\em Finite generation of the cohomology of quotients of
PBW algebras},
J.\ Algebra \textbf{390} (2013), 44--55. 

\bibitem{SV} D.\ \c{S}tefan and C.\ Vay,
{\em The cohomology ring of the 12-dimensional Fomin-Kirillov algebra},
Adv.\ Math.\  \textbf{291} (2016), 584--620. 

\bibitem{R85} D.\ E.\ Radford, {\em The structure of Hopf algebras with a projection}, 
J.\ Algebra \textbf{92} (1985), 322--347.

\bibitem{venkov} B.\ B.\ Venkov, {\em Cohomology algebras for some classifying
spaces}, Dokl.\ Akad.\ Nauk.\ SSR  \textbf{127} (1959), 943--944.


\end{thebibliography}
\end{document}